\documentclass[10pt]{amsart}

\usepackage{enumerate}
\usepackage{graphicx}
\usepackage{float}
\usepackage{placeins}
\usepackage{mdframed}
\usepackage{amssymb}
\usepackage{esint}
\usepackage{cool}
\usepackage[all,cmtip]{xy}
\usepackage{mathtools}
\usepackage{amstext} % for \text macro
\usepackage{array}   % for \newcolumntype macro
\usepackage[shortlabels]{enumitem}

\newcolumntype{L}{>{$}l<{$}} % math-mode version of "l" column type
\newtheorem{theorem}{Theorem}[section]
\newtheorem{lemma}[theorem]{Lemma}
\newtheorem{cor}[theorem]{Corollary}
\newtheorem{prop}[theorem]{Proposition}
\newtheorem{setup}[theorem]{Setup}
\theoremstyle{definition}
\newtheorem{definition}[theorem]{Definition}

\newtheorem{observation}[theorem]{Observation}

\newtheorem{chunk}[theorem]{}
\theoremstyle{remark}
\newtheorem{remark}[theorem]{Remark}

\newtheorem{the context}[theorem]{The Context}

\numberwithin{equation}{theorem}
\numberwithin{equation}{section}

% COMENTS

%\newcommand{\nt}[2][$^\spadesuit$]{\hspace{0pt}#1\marginpar{\tt\raggedleft
%    #1 #2}}
%\newcommand{\dw}[2][$^\spadesuit$]{\nt[#1]{DW:#2}}
%\newcommand{\ssw}[2][$^\spadesuit$]{\nt[#1]{SSW:#2}}
%\newcommand{\ts}[2][$^\spadesuit$]{\nt[#1]{TS:#2}}

% CATEGORIES

% DIMENSIONS

\newcommand{\pd}{\operatorname{pd}}

% OTHER INVARIANTS

\newcommand{\rank}{\operatorname{rank}}

\newcommand{\soc}{\operatorname{Soc}}

\newcommand{\type}{\operatorname{type}}

\newcommand{\Span}{\operatorname{Span}}

% FUNCTORS

\newcommand{\tor}{\operatorname{Tor}}
\newcommand{\im}{\operatorname{Im}}

%\newcommand{\yext}{\ext_{\caty}}

% IDEALS

\newcommand{\ideal}[1]{\mathfrak{#1}}
\newcommand{\m}{\ideal{m}}

%\newcommand{\p}{\ideal{p}}

% OPERATIONS AND ACCENTS

% OPERATORS

%\newcommand{\Min}{\operatorname{Min}}

% MATHBB

% ARROWS

% MAPS

% MISCELLANEOUS

%\newcommand{\Mod}{\operatorname{Mod}}

% RENEWED COMMANDS

\renewcommand{\geq}{\geqslant}
\renewcommand{\leq}{\leqslant}

\renewcommand{\hom}{\Hom}

% NEW ADDITIONS

\newcommand{\Hom}{\operatorname{Hom}}

\newcommand{\socle}{\operatorname{Soc}}

\newcommand{\maps}[5]{\xymatrix{#1 \ar[r]^-{#3} & #2 \\
#4 \ar@{|->}[r] & #5 \\}}

\newcommand{\mfa}{\mathfrak{a}}
\newcommand{\pf}{\textrm{Pf}}

\def\im{\operatorname{im}}

\setcounter{MaxMatrixCols}{20}

\begin{document}
\title[Resolutions and Tor Algebra Structures]{Resolution and Tor Algebra Structures of Grade 3 Ideals Defining Compressed Rings}
%    Information for first author
\author{Keller VandeBogert }
%    Address of record for the research reported here
\date{\today}

\maketitle

\begin{abstract}
    Let $R=k[x,y,z]$ be a standard graded $3$-variable polynomial ring, where $k$ denotes any field. We study grade $3$ homogeneous ideals $I \subseteq R$ defining compressed rings with socle $k(-s)^{\ell} \oplus k(-2s+1)$, where $s \geq3$ and $\ell \geq 1$ are integers. The case for $\ell =1$ was previously studied in \cite{vandebogert2019structure}; a generically minimal resolution was constructed for all such ideals. The paper \cite{vandebogert2020trimming} generalizes this resolution in the guise of (iterated) trimming complexes. In this paper, we show that all ideals of the above form are resolved by an iterated trimming complex. Moreover, we apply this machinery to construct ideals $I$ such that $R/I$ is a ring of Tor algebra class $G (r)$ for some fixed $r \geq2$, and $R/I$ may be chosen to have arbitrarily large type. In particular, this provides a new class of counterexamples to a conjecture of Avramov not already constructed by Christensen, Veliche, and Weyman in \cite{christensen2019trimming}.
\end{abstract}

\section{Introduction}

Let $(R, \m , k)$ be a regular local ring with maximal ideal $\m$ and residue field $k$. A result of Buchsbaum and Eisenbud (see \cite{buchsbaum1977algebra}) established that any quotient $R/I$ of $R$ with projective dimension $3$ admits the structure of an associative commutative differental graded (DG) algebra. Later, a complete classification of the multiplicative structure of the Tor algebra $\tor_\bullet^R (R/I , k)$ for such quotients was established by Weyman in \cite{weyman1989structure} and Avramov, Kustin, and Miller in \cite{avramov1988poincare}.

Given an $\m$-primary ideal $I = (\phi_1 , \dots , \phi_n) \subseteq R$, one can ``trim" the ideal $I$ by, for instance, forming the ideal $(\phi_1 , \dots , \phi_{n-1} ) + \m \phi_n$. This process is used by Christensen, Veliche, and Weyman (see \cite{christensen2019trimming}) in the case that $R/I$ is a Gorenstein ring to produce ideals defining rings with certain Tor algebra classification, negatively answering a question of Avramov in \cite{avramov2012cohomological}. 

This trimming procedure also arises in classifying certain type $2$ ideals defining compressed rings. More precisely, it is shown in \cite{vandebogert2019structure} that every homogeneous grade $3$ ideal $I \subseteq k[x,y,z]$ defining a compressed ring with socle $\soc (R/I) = k(-s) \oplus k(-2s+1)$ is obtained by trimming a Gorenstein ideal. A complex is produced that resolves all such ideals; it is generically minimal. This resolution is then used to bound the minimal number of generators and, consequently, parameters arising in the Tor algebra classification.

In the current paper, we continue with this theme. In particular, much of the work done in \cite{vandebogert2019structure} can be generalized to the case that $I$ is a homogeneous grade $3$ ideal $I \subseteq k[x,y,z]$ defining a compressed ring with $\soc (R/I) = k(-s)^\ell \oplus k(-2s+1)$ for some $\ell \geq 1$. The values $s$ and $2s-1$ are interesting because they are extremal; more precisely, it is not possible to have a quotient ring as above with socle minimally generated in degrees $s_1$ and $s_2$, with $s_2 \geq 2s_1$ (see \cite[Proposition 1.10]{christensen2021type2} for a proof of this fact). We show that all such ideals are then obtained as \emph{iterated} trimmings of a Gorenstein ideal (see Proposition \ref{genset}), and the Tor algebra structure may be computed in similar fashion.

We employ a piece of machinery from \cite{vandebogert2020trimming}, namely an \emph{iterated trimming complex}, in order to resolve all homogeneous grade $3$ ideals $I$ with $\soc (R/I) = k(-s)^\ell \oplus k(-2s+1)$ for some $\ell \geq 1$. This complex arises as a natural generalization of the complex constructed in \cite{vandebogert2019structure} and has applications to the resolution of a variety of other classes of ideals, explored in \cite{vandebogert2020trimming}.

The paper is organized as follows. In Section \ref{ittrimcx}, we recall the construction of trimming complexes as given in \cite{vandebogert2020trimming}. In Sections \ref{hightype} and \ref{toralgst}, we consider grade $3$ homogeneous ideals $I \subseteq k[x,y,z]$ (a standard graded polynomial ring over a field $k$) defining a compressed ring with $\soc (R/I ) = k(-s)^\ell \oplus k(-2s+1)$. We first show that all such ideals define a ring with tipping point (see Definition \ref{tippt}) $s$ and type $\leq s+2$. Using this information we deduce the previously mentioned fact that $I$ is obtained as the iterated trimming of a grade $3$ Gorenstein ideal. In particular, $I$ is resolved by the complex of Section \ref{ittrimcx}. Moreover, we show precisely when such rings have Tor algebra class $G(r)$, and $r$ is computed in terms of the minimal number of generators of $I$ and the variable $\ell$. 

In Section \ref{sec:realizability}, we show that there are ideals of arbitrarily large type of class $G(r)$, for any $r \geq 2$. The construction of these ideals is remarkably simple and is a generalization of the ideals constructed in \cite{vandebogert2019structure}. One sees that the machinery of iterated trimming complexes allows for a quick proof that these ideals satisfy all of the required hypotheses of Proposition \ref{classg}. In particular, these ideals provide a class of counterexamples to the conjecture of Avramov that is not already contained in \cite{christensen2019trimming}.

\section{Iterated Trimming Complexes}\label{ittrimcx}

In this section, we recall the construction of trimming complexes. All proofs of the following results may be found in Section $2$ and $3$ of \cite{vandebogert2020trimming}; the purpose here is to give a concise and efficient introduction to the machinery, as it will be used in later sections.

\begin{setup}\label{setup4}
Let $R=k[x_1,\dots, x_n]$ be a standard graded polynomial ring over a field $k$. Let $I \subseteq R$ be a homogeneous ideal and $(F_\bullet, d_\bullet)$ denote a homogeneous free resolution of $R/I$. 

Write $F_1 = F_1' \oplus \Big( \bigoplus_{i=1}^m Re_0^i \Big)$, where each $e^i_0$ generates a free direct summand of $F_1$. Using the isomorphism
$$\hom_R (F_2 , F_1 ) = \hom_R (F_2,F_1') \oplus \Big( \bigoplus_{i=1}^m \hom_R (F_2 , Re^i_0) \Big)$$
write $d_2 = d_2' + d_0^1 + \cdots + d^m_0$, where $d_2' \in \hom_R (F_2,F_1')$, $d^i_0 \in \hom_R (F_2 , Re^i_0)$. Let $\mfa_i$ denote any homogeneous ideal with
$$d^i_0 (F_2) \subseteq \mfa_i e^i_0,$$
and $(G^i_\bullet , m^i_\bullet)$ be a homogeneous free resolution of $R/\mfa_i$. 

Use the notation $K' := \im (d_1|_{F_1'} : F_1' \to R)$, $K^i_0 := \im (d_1|_{Re^i_0} : Re^i_0 \to R)$, and let $J := K' + \mfa_1 \cdot K^1_0+ \cdots + \mfa_m \cdot K_0^m$.
\end{setup}

\begin{prop}\label{prop:it1stmap}
Adopt notation and hypotheses of Setup \ref{setup4}. Then for each $i=1,\dots,m$ there exist maps $q^i_1 : F_2 \to G^i_1$ such that the following diagram commutes:
$$\xymatrix{& F_2 \ar[dl]_-{q^i_1} \ar[d]^{{d^i_0}'} \\
G_1 \ar[r]_{m^i_1} & \mfa_i \\},$$
where ${d^i_0}' : F_2 \to R$ is the composition
$$\xymatrix{F_2 \ar[r]^{d^i_0} & Re^i_0 \ar[r] & R\\},$$
the second map sending $e^i_0 \mapsto 1$. 
\end{prop}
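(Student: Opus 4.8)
The plan is to produce each $q_1^i$ simply as a lifting of a map into $\mfa_i$ through a surjection, exploiting the freeness of $F_2$. First I would record the two facts that make the lift possible. Since $(G^i_\bullet, m^i_\bullet)$ is a free resolution of $R/\mfa_i$, the image of $m_1^i\colon G_1^i \to G_0^i = R$ is exactly $\mfa_i$; so, regarded as a map onto $\mfa_i$, $m_1^i$ is a surjection of graded modules. On the other hand, the ideal $\mfa_i$ was chosen in Setup \ref{setup4} precisely so that $d_0^i(F_2) \subseteq \mfa_i e_0^i$, and composing with the identification $e_0^i \mapsto 1$ shows ${d_0^i}'(F_2) \subseteq \mfa_i$. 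Hence ${d_0^i}'$ is a well-defined homogeneous map $F_2 \to \mfa_i$.

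Now, because $F_2$ is a graded free $R$-module, I would define $q_1^i$ on a homogeneous basis: for each basis element $f$ of $F_2$, the element ${d_0^i}'(f)$ is homogeneous and lies in $\mfa_i$, so it admits a homogeneous preimage under the surjection $m_1^i$; choosing one such preimage for each $f$ and extending $R$-linearly produces a homogeneous map $q_1^i\colon F_2 \to G_1^i$ satisfying $m_1^i \circ q_1^i = {d_0^i}'$. This is exactly the commutativity asserted by the displayed diagram.

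There is essentially no obstacle here: the statement is a routine application of the projectivity (freeness) of $F_2$ against the surjectivity of the differential $m_1^i$ onto $\mfa_i$. The only point deserving a word of care is that the lift can be taken degree-preserving, which holds because $m_1^i$ is a surjection of graded modules and homogeneous elements may be lifted to homogeneous elements. (Note that exactness of $F_\bullet$ plays no role in this particular step; only the shape of $d_2$ and the choice of the $\mfa_i$ together with their resolutions are used.)
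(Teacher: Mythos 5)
Your proof is correct and is the standard argument: since $d_0^i(F_2)\subseteq\mfa_i e_0^i$ by the choice of $\mfa_i$, the map ${d_0^i}'$ lands in $\mfa_i$, which is exactly the image of $m_1^i$, so the lift exists by freeness (projectivity) of $F_2$, with homogeneity preserved as you note. This matches the approach of the proof referenced in the paper (which defers to \cite{vandebogert2020trimming}), so nothing further is needed.
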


\begin{prop}\label{prop:ittheyexist}
Adopt notation and hypotheses as in Setup \ref{setup4}. Then for each $i=1, \dots , m$ there exist maps $q^i_k : F_{k+1} \to G^i_{k}$ for all $k \geq 2$ such that the following diagram commutes:
$$\xymatrix{F_{k+1} \ar[d]_{q^i_k} \ar[r]^-{d_{k+1}} & F_k \ar[d]^{q^i_{k-1}} \\
G^i_k \ar[r]_{m^i_k} & G^i_{k-1} \\}$$
\end{prop}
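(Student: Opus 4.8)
The plan is to construct the $q^i_k$ by the usual comparison-theorem induction, using that $G^i_\bullet$ is acyclic in positive homological degrees (it resolves $R/\mfa_i$) and that each $F_j$ is free, hence projective. It is cleanest to first extend the data of Proposition \ref{prop:it1stmap} one spot to the left: for a fixed $i$, let $q^i_0 : F_1 \to G^i_0 = R$ be the projection $F_1 = F_1' \oplus \left( \bigoplus_{j} Re^j_0 \right) \onto Re^i_0 \xrightarrow{e^i_0 \mapsto 1} R$. Because $d^i_0$ is, by construction, the $Re^i_0$-component of $d_2$, we have ${d^i_0}' = q^i_0 \circ d_2$, so Proposition \ref{prop:it1stmap} is exactly the statement that the square
\[ \xymatrix{F_2 \ar[r]^-{d_2} \ar[d]_-{q^i_1} & F_1 \ar[d]^-{q^i_0} \\ G^i_1 \ar[r]_-{m^i_1} & R } \]
commutes (here $m^i_1$ has image $\mfa_i$, and the square lives over $R$ precisely because $d^i_0(F_2) \subseteq \mfa_i e^i_0$).

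Next I would induct on $k$. Assume $q^i_{k-1} : F_k \to G^i_{k-1}$ and $q^i_k : F_{k+1} \to G^i_k$ have been built with $m^i_k \circ q^i_k = q^i_{k-1} \circ d_{k+1}$, the case $k=1$ being the square above. Then
\[ m^i_k \circ (q^i_k \circ d_{k+2}) = (m^i_k \circ q^i_k) \circ d_{k+2} = q^i_{k-1} \circ d_{k+1} \circ d_{k+2} = 0, \]
so $q^i_k \circ d_{k+2}$ maps $F_{k+2}$ into $\ker m^i_k = \im m^i_{k+1}$, the equality holding since $G^i_\bullet$ is a resolution. As $F_{k+2}$ is projective and $m^i_{k+1}$ surjects onto $\im m^i_{k+1}$, the map $q^i_k \circ d_{k+2}$ lifts along $m^i_{k+1}$ to some $q^i_{k+1} : F_{k+2} \to G^i_{k+1}$ with $m^i_{k+1} \circ q^i_{k+1} = q^i_k \circ d_{k+2}$. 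This is the required commuting square in degree $k+1$, so the induction continues and produces $q^i_k$ for every $k \geq 2$ (in fact every $k \geq 1$).

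I do not expect a genuine obstacle here: this is the standard lift of a chain map along an acyclic complex, and the only points needing attention are bookkeeping — the degree shift built into Setup \ref{setup4}, so that $q^i_k : F_{k+1} \to G^i_k$ rather than $F_k \to G^i_k$ — and checking that the base case really does invoke the hypothesis $d^i_0(F_2) \subseteq \mfa_i e^i_0$, since without it the triangle of Proposition \ref{prop:it1stmap}, and hence the $k=1$ square above, would not even be well-defined.
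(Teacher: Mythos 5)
Your argument is correct and is exactly the standard comparison-theorem lift that the paper (deferring to \cite{vandebogert2020trimming}) relies on: the base case is Proposition \ref{prop:it1stmap} rewritten as a commuting square via $q^i_0$, and the inductive step uses $d_{k+1}d_{k+2}=0$, exactness of $G^i_\bullet$ in positive degrees, and projectivity of $F_{k+2}$. No gaps.
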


Theorem \ref{thm:itres} is one of the main results of \cite{vandebogert2020trimming} and, in the current paper, will end up being an explicit free resolution for the ideals considered in later sections.

\begin{theorem}\label{thm:itres}
Adopt notation and hypotheses as in Setup \ref{setup4}. Then the mapping cone of the morphism of complexes
\begin{equation}\label{itcomx}
\xymatrix{\cdots \ar[r]^{d_{k+1}} &  F_{k} \ar[ddd]^{\begin{pmatrix} q_{k-1}^1 \\
\vdots \\
q_{k-1}^m \\
\end{pmatrix}}\ar[r]^{d_{k}} & \cdots \ar[r]^{d_3} & F_2 \ar[rrrr]^{d_2'} \ar[ddd]^{\begin{pmatrix} q_1^1 \\
\vdots \\
q_1^m \\
\end{pmatrix}} &&&& F_1' \ar[ddd]^{d_1} \\
&&&&&&& \\
&&&&&&& \\
\cdots \ar[r]^-{\bigoplus m^i_k} & \bigoplus_{i=1}^m G^i_{k-1} \ar[r]^-{\bigoplus m^i_{k-1}} & \cdots \ar[r]^-{\bigoplus m^i_2} & \bigoplus_{i=1}^m G^i_1 \ar[rrrr]^-{-\sum_{i=1}^\ell m^i_1(-)\cdot d_1(e^i_0)} &&&& R \\}\end{equation}
is acyclic and forms a resolution of the quotient ring defined by $K' + \mfa_1 \cdot K^1_0+ \cdots + \mfa_m \cdot K_0^m$. 
\end{theorem}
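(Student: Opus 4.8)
The plan is to analyze the mapping cone of the morphism of complexes displayed in \eqref{itcomx} by means of the associated long exact sequence in homology. Concretely, \eqref{itcomx} exhibits a morphism of complexes $\psi\colon T_\bullet \to B_\bullet$, where $T_\bullet$ is the complex $\cdots \xrightarrow{d_3} F_2 \xrightarrow{d_2'} F_1'$ with $F_1'$ in homological degree $0$, where $B_\bullet$ is $\cdots \xrightarrow{\bigoplus_i m^i_2} \bigoplus_{i=1}^m G^i_1 \xrightarrow{\partial_1} R$ with $R$ in degree $0$ and $\partial_1 = -\sum_{i=1}^m m^i_1(-)\cdot d_1(e^i_0)$, and where $\psi$ equals $d_1|_{F_1'}$ in degree $0$ and $(q^i_k)_{i=1}^m$ in degree $k\geq 1$. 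The first step is to confirm that $\psi$ is a chain map: commutativity of the squares in positive degrees is precisely Propositions \ref{prop:it1stmap} and \ref{prop:ittheyexist}, while commutativity of the rightmost square is the identity $d_1 d_2 = 0$ rewritten using $d_2 = d_2' + \sum_i d^i_0$ and the relation $m^i_1 q^i_1 = {d^i_0}'$ from Proposition \ref{prop:it1stmap}. Granting this, the complex \eqref{itcomx} is the mapping cone $C_\bullet := \cone(\psi)$, and the short exact sequence of complexes $0 \to B_\bullet \to C_\bullet \to T_\bullet[-1] \to 0$ reduces the problem to computing the homology of $T_\bullet$, the homology of $B_\bullet$, and the maps $H_j(\psi)$ (which occur, up to sign, as the connecting homomorphisms).

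The second step is to carry out that computation, using only that $F_\bullet$ resolves $R/I$ and each $G^i_\bullet$ resolves $R/\mfa_i$. This immediately gives $H_j(T_\bullet) = H_j(B_\bullet) = 0$ for $j \geq 2$, hence $H_j(C_\bullet) = 0$ for $j \geq 3$; in degree $0$ it gives $H_0(T_\bullet) = F_1'/\im(d_2')$ and $H_0(B_\bullet) = R/(\mfa_1\cdot K^1_0 + \cdots + \mfa_m\cdot K^m_0)$; and in degree $1$, exactness of $F_\bullet$ at $F_2$ and of the $G^i_\bullet$ at $G^i_1$ gives $H_1(T_\bullet) = \ker(d_2')/\ker(d_2)$ and, via $(g_i)_i \mapsto (m^i_1(g_i))_i$, an isomorphism $H_1(B_\bullet) \cong S$, where $S := \{(a_i)_i \in \mfa_1 \oplus \cdots \oplus \mfa_m : \sum_{i=1}^m a_i\, d_1(e^i_0) = 0\}$. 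Under these identifications, $H_1(\psi)$ becomes the map sending the class of $f \in \ker(d_2')$ to $({d^i_0}'(f))_{i=1}^m$ (again by $m^i_1 q^i_1 = {d^i_0}'$), and $H_0(\psi)$ becomes $[f'] \mapsto [d_1(f')]$.

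The only genuinely substantive step is then to prove that $H_1(\psi)$ is an isomorphism and $H_0(\psi)$ is injective; the long exact sequence at once forces $H_2(C_\bullet) = \ker H_1(\psi) = 0$ and an exact sequence $0 \to \coker H_1(\psi) \to H_1(C_\bullet) \to \ker H_0(\psi) \to 0$, whence $H_1(C_\bullet) = 0$, while $H_0(C_\bullet) = R/(K' + \mfa_1\cdot K^1_0 + \cdots + \mfa_m\cdot K^m_0)$ is read directly off the differential into $R$ — exactly the asserted resolution. Injectivity of $H_1(\psi)$ is formal: if $f \in \ker(d_2')$ has ${d^i_0}'(f) = 0$ for all $i$, then $d_2(f) = 0$, so $f \in \ker(d_2) = \im(d_3)$. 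For the surjectivity of $H_1(\psi)$ and the injectivity of $H_0(\psi)$ I would lift syzygies through the exact sequence $F_2 \xrightarrow{d_2} F_1 \xrightarrow{d_1} R$: given $(a_i)_i \in S$, the element $\sum_i a_i e^i_0 \in F_1$ lies in $\ker(d_1) = \im(d_2)$, say $\sum_i a_i e^i_0 = d_2(f)$, and comparing components in $F_1 = F_1' \oplus \bigoplus_i Re^i_0$ shows $d_2'(f) = 0$ and ${d^i_0}'(f) = a_i$; likewise, if $f' \in F_1'$ satisfies $d_1(f') = \sum_i b_i\, d_1(e^i_0)$ with each $b_i \in \mfa_i$, then $f' - \sum_i b_i e^i_0 \in \ker(d_1) = \im(d_2)$, say $= d_2(f)$, and comparing $F_1'$-components gives $f' = d_2'(f) \in \im(d_2')$. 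I do not expect a serious obstacle here: the points demanding care are the sign and indexing conventions in the mapping cone (and the matching of the connecting maps with $H_\bullet(\psi)$) and the identification of $H_1(B_\bullet)$, since the differential of $B_\bullet$ into $R$ is twisted by the coefficients $d_1(e^i_0)$ rather than being a plain direct sum of the augmentations $m^i_1$. (One could alternatively argue by induction on $m$, the case $m = 1$ being proved as above and the inductive step applying the $m=1$ result to the $(m-1)$-fold iterated trimming complex, in whose degree-one term the summand $Re^m_0$ survives with associated map to $R$ given by multiplication by $d_1(e^m_0)$.)
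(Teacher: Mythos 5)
Your proof is correct, and it follows essentially the same route as the source: the present paper only quotes Theorem \ref{thm:itres} from \cite{vandebogert2020trimming}, where the argument likewise verifies that \eqref{itcomx} is a chain map via Propositions \ref{prop:it1stmap} and \ref{prop:ittheyexist} together with $d_1d_2=0$, and then establishes acyclicity of the cone by comparing homology of the two rows, with the key step being exactly your lifting of syzygies through $\ker(d_1)=\im(d_2)$ and the decomposition $F_1=F_1'\oplus\bigoplus_i Re_0^i$. The points you flag as needing care (sign/indexing in the cone, and the identification $H_1(B_\bullet)\cong\{(a_i)\in\prod_i\mfa_i:\sum_i a_i\,d_1(e_0^i)=0\}$) are handled correctly as you describe, so there is no gap.
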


\begin{definition}\label{def:ittrimcx}
The \emph{iterated trimming complex} associated to the data of Setup \ref{setup4} is the complex of Theorem \ref{thm:itres}.
\end{definition}

Of course, even if the input data or the iterated trimming complex is minimal, the vertical maps of diagram \ref{itcomx} may have scalar entries. One can still compute the Betti numbers, however:

\begin{cor}\label{cor:ittorrk}
Adopt notation and hypotheses of Setup \ref{setup4}. Assume furthermore that the complexes $F_\bullet$ and $G_\bullet$ are minimal. Then for $i \geq 2$,
$$\dim_k \tor_i^R (R/J , k) = \rank F_i + \sum_{j=1}^m  \rank G^j_i - \rank \Bigg( \begin{pmatrix} q_i^1 \\
\vdots \\
q_i^m \\
\end{pmatrix} \otimes k\Bigg) - \rank \Bigg( \begin{pmatrix} q_{i-1}^1 \\
\vdots \\
q_{i-1}^m \\
\end{pmatrix} \otimes k\Bigg).$$
Similarly,
$$\mu (J) = \mu(K) -m + \sum_{j=1}^m \mu(\mfa_j) -\rank \Bigg( \begin{pmatrix} q_1^1 \\
\vdots \\
q_1^m \\
\end{pmatrix} \otimes k\Bigg) .\qquad \qquad \square $$ 
\end{cor}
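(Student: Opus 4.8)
The plan is to apply Theorem \ref{thm:itres} to view the iterated trimming complex $T_\bullet$ of Definition \ref{def:ittrimcx} as a (generally non-minimal) free resolution of $R/J$, and then to compute $\tor_i^R(R/J,k)=\HH_i(T_\bullet\otimes_Rk)$ by a direct rank count that tracks which entries of the differential of $T_\bullet$ survive reduction modulo $\m$. Throughout, write $Q_k:=\begin{pmatrix}q^1_k\\\vdots\\q^m_k\end{pmatrix}\colon F_{k+1}\to\bigoplus_{j=1}^mG^j_k$ for the column of vertical maps appearing in diagram \ref{itcomx}.

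First I would read off the underlying modules and differential of $T_\bullet$ from diagram \ref{itcomx}. As the mapping cone of the displayed morphism, $T_\bullet$ has $T_0=R$, $T_1=F_1'\oplus\bigl(\bigoplus_{j=1}^mG^j_1\bigr)$, and $T_i=F_i\oplus\bigl(\bigoplus_{j=1}^mG^j_i\bigr)$ for $i\geq2$; the map $T_i\to T_{i-1}$ is the usual mapping cone differential, whose only possibly nonzero blocks are the differential of $F_\bullet$ (with $d_2'$ replacing $d_2$ in the passage $T_2\to T_1$), the differential $\bigoplus_jm^j_\bullet$ of $\bigoplus_jG^j_\bullet$ (with the map $-\sum_jm^j_1(-)\cdot d_1(e^j_0)$ in the passage $T_1\to T_0$), and the maps $Q_{i-1}\colon F_i\to\bigoplus_jG^j_{i-1}$ for $i\geq2$ together with $d_1\colon F_1'\to R$ for $i=1$.

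Next I would observe that, apart from the maps $Q_{i-1}$, every block of the differential of $T_\bullet$ has all of its entries in $\m$: the differentials of $F_\bullet$ and of each $G^j_\bullet$ because these complexes are minimal; $d_2'$ because it is a component of $d_2$; $d_1$ because its entries are the minimal generators of the proper homogeneous ideal $I$; and $-\sum_jm^j_1(-)\cdot d_1(e^j_0)$ because $d_1(e^j_0)\in\m$. (We may also assume each $\mfa_j\subseteq\m$, which is automatic since $d^j_0(F_2)\subseteq\m e^j_0$ by minimality of $F_\bullet$.) Consequently the differential of $T_\bullet\otimes_Rk$ in homological degree $i$ is concentrated in the single block $Q_{i-1}\otimes k\colon F_i\otimes k\to\bigoplus_jG^j_{i-1}\otimes k$ when $i\geq2$, and is zero when $i=1$; in particular $\rank(\partial_i\otimes k)=\rank(Q_{i-1}\otimes k)$ for all $i\geq2$.

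Finally I would assemble the homology. For $i\geq2$ one has
$$\dim_k\tor_i^R(R/J,k)=\dim_k(T_i\otimes_Rk)-\rank(\partial_i\otimes k)-\rank(\partial_{i+1}\otimes k),$$
and inserting $\dim_k(T_i\otimes_Rk)=\rank F_i+\sum_{j=1}^m\rank G^j_i$, $\rank(\partial_i\otimes k)=\rank(Q_{i-1}\otimes k)$, and $\rank(\partial_{i+1}\otimes k)=\rank(Q_i\otimes k)$ yields the first identity. For the second, $\tor_1^R(R/J,k)\cong J/\m J$ has $k$-dimension $\mu(J)$; here $\partial_1\otimes k=0$, $\rank(\partial_2\otimes k)=\rank(Q_1\otimes k)$, and $\dim_k(T_1\otimes_Rk)=\rank F_1'+\sum_{j=1}^m\rank G^j_1=(\mu(K)-m)+\sum_{j=1}^m\mu(\mfa_j)$, using $F_1=F_1'\oplus\bigoplus_{j=1}^mRe^j_0$ with $\rank F_1=\mu(K)$ and $\rank G^j_1=\mu(\mfa_j)$ by minimality. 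The only genuine obstacle is the indexing bookkeeping: one must align the homological position of the $q^i_\bullet$ in diagram \ref{itcomx} with the mapping cone differential so that $\rank(\partial_i\otimes k)$ comes out equal to $\rank(Q_{i-1}\otimes k)$, and confirm that the augmentation block $\bigoplus_jG^j_1\to R$ and the truncated block $d_2'$ really do vanish modulo $\m$; the rest is a routine dimension count.
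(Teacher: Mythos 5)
Your proof is correct and is essentially the argument the paper relies on (it defers the proof of this corollary to Sections 2--3 of \cite{vandebogert2020trimming}): tensor the mapping-cone resolution of Theorem \ref{thm:itres} with $k$, note that minimality of $F_\bullet$ and the $G^j_\bullet$ kills every block of the reduced differential except $Q_{i-1}\otimes k$, and count ranks. The indexing and the vanishing of $d_2'\otimes k$, $d_1\otimes k$, and the augmentation block all check out as you describe.
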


\section{Compressed Rings of Higher Type}\label{hightype}

In this section and Section \ref{toralgst} we generalize some of the work done in \cite{vandebogert2019structure}. In particular, Corollary \ref{cor:ittrimresolvesyou} says that the ideals introduced in Setup \ref{setup2} are resolved by the iterated trimming complex of Theorem \ref{thm:itres}. In Proposition \ref{classg}, we show that under suitable hypotheses, all ideals introduced in Setup \ref{setup2} define rings of Tor algebra class $G(r)$ for some $r$ (see Definition \ref{def:classg}). This information will be used to produce rings of Tor algebra class $G$ with arbitrarily large type in Section \ref{sec:realizability}.

\begin{definition}
Let $A$ be a local Artinian $k$-algebra, where $k$ is a field and $\m$ denotes the maximal ideal. The top socle degree is the maximum $s$ with $\m^s \neq 0$ and the socle polynomial of $A$ is the formal polynomial $\sum_{i=0}^s c_i z^i$, where
$$c_i = \dim_k \frac{\socle (A) \cap \m^i}{\socle (A) \cap \m^{i+1}}.$$
An Artinian $k$-algebra is \emph{standard graded} if it is generated as an algebra in degree $1$. 
\end{definition}

\begin{definition}\label{compdef}
A standard graded Artinian $k$-algebra $A$ with embedding dimension $e$, top socle degree $s$, and socle polynomial $\sum_{i=0}^s c_i z^i$ is \emph{compressed} if
$$\dim_k \m^i/\m^{i+1} = \min \Big\{ \binom{e-1+i}{i} , \sum_{\ell=0}^s c_\ell \binom{e-1+\ell-i}{\ell - i} \Big\}$$
for $i =0, \dots , s$.
\end{definition}

\begin{chunk}\label{notation}
Let $n \geq 1$ be an integer and $k$ denote a field of arbitrary characteristic. Let $V$ be a vector space of dimension $n$ over $k$. Give the symmetric algebra $S(V) =: R$ and divided power algebra $D(V^*)$ the standard grading (that is, $S_1(V) = V$, $D_1 (V^*) = V^*$). The notation $S_i := S_i (V)$ denotes the degree $i$ component of the symmetric algebra on $V$. Similarly, the notation $D_i := D_i (V^*)$ denotes the degree $i$ component of the divided power algebra on $V^*$.

Given a homogeneous $I \subseteq S(V)$ defining an Artinian ring, there is an associated inverse system $0:_{D(V^*)} I$. Similarly, for any finitely generated graded submodule $N \subseteq D(V^*)$ there is a corresponding homogeneous ideal $0:_{S(V)} N$ defining an Artinian ring.  

If $I$ is a homogeneous ideal with associated inverse system minimally generated by elements $\phi_1 , \ \dots , \phi_k$ with $\deg \phi_i = s_i$, then there are induced vector space homomorphisms
$$\Phi_i : S_i \to \bigoplus_{j=1}^k D_{s_j - i}$$
sending $f \mapsto (f \cdot \phi_1 , \dots , f \cdot \phi_k)$. 
\end{chunk}

\begin{definition}\label{tippt}
Adopt notation as in \ref{notation}. Let $I \subseteq S(V)$ be a homogeneous ideal with associated inverse system minimally generated by elements $\phi_1 , \ \dots , \phi_k$ with $\deg \phi_i = s_i$. Let $m$ denote the first integer for which $\Phi_m$ is a surjection. Then $m$ is called the \emph{tipping point} of $I$; this is well defined since the rank of the domain and codomain of each $\Phi_i$ is increasing/decreasing in $i$, respectively (and the codomain is eventually $0$).
\end{definition}

Notice that the tipping point $m$ of $I$ and the initial degree of $I$ agree unless $\Phi_m$ is an isomorphism, in which case the initial degree of $I$ is $m+1$ (see Remark $1.8$ of \cite{miller2018free}).

\begin{prop}[\cite{miller2018free}, Lemma 1.13]\label{proplol}
Adopt notation as in \ref{notation}. Let $\phi$ be a homogeneous element of $D(V^*)$ of degree $s$. Then the tipping point of the ideal $0 :_{S(V)} \phi$ is $\lceil s/2 \rceil$. In addition, the induced maps $\Phi_i$ satisfy the following properties for every integer $i$.
\begin{enumerate}[(a)]
    \item $\hom_{k} (\Phi_i , k) = \Phi_{s-i}$
    \item $\Phi_i$ is surjective if and only if $\Phi_{s-i}$ is injective.
\end{enumerate}
\end{prop}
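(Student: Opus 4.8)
The statement breaks into two parts: the duality identities (a) and (b) relating the catalecticant map $\Phi_i$ to $\Phi_{s-i}$, and the computation of the tipping point. The plan is to establish (a) and (b) by pure linear algebra over Macaulay duality, and then to read off the tipping point from (b) together with the maximal-rank property of the maps $\Phi_i$.

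For (a) and (b), I would first recall the two structural facts about the pairing between the symmetric and divided power algebras. In each degree $j$ there is a perfect $k$-bilinear pairing $\langle\,\cdot\,,\,\cdot\,\rangle\colon S_j\times D_j\to k$ identifying $D_j$ with $\Hom_k(S_j,k)$, and the contraction action of $S(V)$ on $D(V^*)$ is adjoint to multiplication: $\langle g,f\cdot\psi\rangle=\langle fg,\psi\rangle$ whenever $f\in S_i$, $g\in S_j$, $\psi\in D_{i+j}$. Granting these, (a) is a one-line check: the $k$-dual of $\Phi_i\colon S_i\to D_{s-i}$, $f\mapsto f\cdot\phi$, viewed through the identifications $D_{s-i}^\vee\cong S_{s-i}$ and $S_i^\vee\cong D_i$ as a map $S_{s-i}\to D_i$, sends $g$ to the functional $f\mapsto\langle\Phi_i(f),g\rangle=\langle f\cdot\phi,g\rangle=\langle fg,\phi\rangle=\langle f,g\cdot\phi\rangle$, hence to $g\cdot\phi=\Phi_{s-i}(g)$. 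So $\Phi_i^\vee=\Phi_{s-i}$. Part (b) is then immediate, since a linear map of finite-dimensional $k$-vector spaces is surjective exactly when its transpose is injective.

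For the tipping point, write $I=0:_{S(V)}\phi$ and let $m$ denote its tipping point (with $n=\dim_k V\ge 2$). One inequality is formal: if $\Phi_i$ is surjective then $\dim_k S_{s-i}=\dim_k D_{s-i}=\rank\Phi_i\le\dim_k S_i$, and since $j\mapsto\dim_k S_j$ is strictly increasing this forces $s-i\le i$, i.e. $i\ge\lceil s/2\rceil$; hence $m\ge\lceil s/2\rceil$. For the reverse inequality I would use (b) to replace the assertion ``$\Phi_{\lceil s/2\rceil}$ is surjective'' by ``$\Phi_{\lfloor s/2\rfloor}$ is injective'', i.e. by the vanishing $I_j=0$ for $j\le\lfloor s/2\rfloor$, and then invoke the fact that the catalecticant maps have maximal rank, $\rank\Phi_j=\min\{\dim_k S_j,\dim_k S_{s-j}\}$; for $j\le\lfloor s/2\rfloor$ this gives $\ker\Phi_j=I_j=0$. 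Combining the two inequalities yields $m=\lceil s/2\rceil$.

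The only non-formal ingredient is the maximal-rank statement invoked in the last step, which is equivalent to saying that the Artinian Gorenstein ring $S(V)/I$ is compressed. Parts (a), (b), and the inequality $m\ge\lceil s/2\rceil$ hold for every nonzero $\phi$; the equality $m=\lceil s/2\rceil$, however, genuinely uses this property of $\phi$, and I would cite Macaulay's characterization of compressed Gorenstein algebras (or the standing hypotheses of the section applying the proposition) rather than reprove it. The rest is routine bookkeeping with the adjunction $\langle g,f\cdot\psi\rangle=\langle fg,\psi\rangle$ and with the binomial coefficients $\dim_k S_j=\binom{n-1+j}{n-1}$.
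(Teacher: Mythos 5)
The first thing to say is that the paper contains no proof of this proposition to compare yours against: it is imported verbatim from \cite{miller2018free}, so your argument has to stand on its own. Your treatment of (a) and (b) is correct and is the standard Macaulay-duality computation: the perfect pairing $S_j\times D_j\to k$ together with the adjunction $\langle g,f\cdot\psi\rangle=\langle fg,\psi\rangle$ identifies $\hom_k(\Phi_i,k)$ with $\Phi_{s-i}$, and (b) follows by taking transposes. Your lower bound $m\geq\lceil s/2\rceil$ is also correct (given $\dim_k V\geq 2$, which holds in every application here).

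The gap is in the upper bound, and you have put your finger on exactly the right spot without drawing the right conclusion from it. The maximal-rank property $\rank\Phi_j=\min\{\dim_k S_j,\dim_k S_{s-j}\}$ is not a citable fact about an \emph{arbitrary} homogeneous $\phi\in D_s$: it is precisely the assertion that $S(V)/(0:_{S(V)}\phi)$ is compressed, and it fails for special $\phi$. Concretely, take $\phi$ to be the $s$-th divided power of a single dual variable, say $\phi=X^{(s)}$ with $n=3$; then $y$ and $z$ annihilate $\phi$, the image of $\Phi_i$ is the line spanned by $X^{(s-i)}$ for every $i\leq s$, and the tipping point is $s$, not $\lceil s/2\rceil$. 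So the proposition as transcribed in this paper is false without a genericity or compressedness hypothesis on $\phi$; what is true (this is the existence of compressed Gorenstein algebras, due to Iarrobino and Fr\"oberg--Laksov, and is the setting of \cite{miller2018free}) is that a \emph{general} $\phi$ --- equivalently, a $\phi$ whose apolar algebra is compressed --- has tipping point $\lceil s/2\rceil$. Once that hypothesis is restored your argument closes, since compressedness gives $I_j=0$ for $j\leq\lfloor s/2\rfloor$ and hence, via (b), surjectivity of $\Phi_{\lceil s/2\rceil}$; and nothing downstream in the paper is damaged, because the tipping-point claim is only ever invoked for $\phi_t$ after Lemma \ref{iscomp} has shown $I_t$ is compressed. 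But as a proof of the literal statement your plan cannot be completed, and the honest conclusion of your own analysis should have been that the statement is missing a hypothesis, not that the missing ingredient can simply be cited.
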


\begin{setup}\label{setup2}
Let $k$ be a field and let $R = k[x,y,z]$ be a standard graded polynomial ring over a field $k$. Let $I \subset R$ be a grade $3$ homogeneous ideal defining a compressed ring with $\soc (R/I) = k(-s)^\ell \oplus k(-2s+1)$, where $s \geq 3$. 

Write $I = I_1 \cap I_2 \cap \cdots \cap I_\ell \cap I_t$ for $I_1,\dots, I_\ell$ homogeneous grade $3$ Gorenstein ideals defining rings with socle degrees $s$ and $I_t$ a homogeneous grade $3$ Gorenstein ideal defining a ring with socle degree $2s-1$. The notation $R_+$ will denote the irrelevant ideal ($R_{>0}$).
\end{setup}

\begin{theorem}\label{tippingpt}
Adopt notation and hypotheses of Setup \ref{setup2}. Then the tipping point of $I$ is equal to $s$. In particular, $\ell \leq s+1$. 
\end{theorem}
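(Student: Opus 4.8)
The plan is to pin down the tipping point by comparing the two distinguished socle degrees $s$ and $2s-1$ and exploiting the duality in Proposition \ref{proplol}. First I would set up the inverse system: since $\soc(R/I) = k(-s)^\ell \oplus k(-2s+1)$, the associated inverse system $0:_{D(V^*)} I$ is minimally generated by $\ell$ elements $\phi_1,\dots,\phi_\ell$ of degree $s$ together with one element $\psi$ of degree $2s-1$. For a degree $i$ with $0 \le i \le s$, the induced map $\Phi_i \colon S_i \to \big(\bigoplus_{j=1}^\ell D_{s-i}\big) \oplus D_{2s-1-i}$ records multiplication into each cyclic piece. The key observation is that $\Phi_i$ surjective is equivalent to a combination of the two pieces being surjective; because $s \le 2s-1-i$ forces the $D_{2s-1-i}$ coordinate to have larger codomain than the $\ell$ copies of $D_{s-i}$ for small $i$, the binding constraint shifts as $i$ grows.

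The core computation: I would show that for $i < s$ the map $\Phi_i$ is \emph{not} surjective, while $\Phi_s$ \emph{is} surjective. For the failure of surjectivity when $i<s$, I would use the compressed hypothesis (Definition \ref{compdef}): the Hilbert function of $R/I$ is forced to be maximal, $\dim_k \m^i/\m^{i+1} = \min\{\binom{2+i}{i}, \text{(sum over socle)}\}$, and in the range $i \le s-1$ the minimum is realized by the first term $\binom{2+i}{2}$, i.e.\ $R/I$ agrees with the polynomial ring in degrees $\le s-1$; equivalently $I$ has initial degree $\ge s$, so $\Phi_i$ is injective for $i \le s-1$, hence (comparing ranks, since $\dim S_i < \dim$ of the codomain in that range) cannot be surjective. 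For the surjectivity of $\Phi_s$: I would argue that the $\ell$ degree-$s$ generators contribute, via the socle-degree-$s$ Gorenstein pieces $I_1,\dots,I_\ell$, and that $\Phi_s$ restricted to the $\psi$-coordinate is exactly the map $S_s \to D_{s-1}$ for the Gorenstein ideal $0:_{S}\psi$ of socle degree $2s-1$, which by Proposition \ref{proplol} has tipping point $\lceil (2s-1)/2 \rceil = s$, so $\Phi_s$ is surjective on that coordinate; combined with the contribution of the $\phi_j$'s (which surject onto $D_0 = k$ in each copy) and a dimension/rank count using compressedness in degree $s$, one gets surjectivity of the full $\Phi_s$. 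This gives tipping point exactly $s$.

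For the bound $\ell \le s+1$: once the tipping point is $s$, $\Phi_s$ is surjective, so $\dim_k S_s \ge \dim_k\big(\bigoplus_{j=1}^\ell D_0 \oplus D_{s-1}\big) = \ell + \dim_k D_{s-1} = \ell + \binom{s+1}{2}$. Since $\dim_k S_s = \binom{s+2}{2} = \binom{s+1}{2} + (s+1)$, this yields $\ell \le s+1$ immediately. (I would double check whether the paper wants $\ell \le s+1$ here versus the weaker $\ell \le s+2$ mentioned in the introduction as the type bound; the rank count above gives $s+1$ directly from surjectivity at the tipping point.)

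The main obstacle I anticipate is the surjectivity of $\Phi_s$: one must be careful that the minimal generators $\phi_1,\dots,\phi_\ell,\psi$ interact correctly, i.e.\ that no cancellation occurs between the $D_{s-1}$-coordinate coming from $\psi$ and the $D_0$-coordinates coming from the $\phi_j$, and that the compressed condition in degree $s$ (where the min in Definition \ref{compdef} may switch which term is smaller) is exactly what forces $\Phi_s$ to hit everything rather than merely having the right source dimension. Establishing this cleanly will likely require the duality statement Proposition \ref{proplol}(b) applied piece-by-piece, together with the fact that a sum of surjections onto a direct sum need not be surjective, so a genuine Hilbert-function argument — not just a rank count — is needed at $i=s$.
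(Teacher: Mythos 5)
Your argument for the lower bound (tipping point $\geq s$) is essentially right and somewhat cleaner than the paper's Lemma~\ref{lower}: compressedness forces $\dim_k (R/I)_i = \binom{i+2}{2}$ for $i\leq s-1$, so $I_i = 0$, so $\Phi_i$ is injective; a rank count then shows $\Phi_i$ cannot be surjective. The gap is in the upper bound, and it is a real one. Compressedness pins down $\rank\Phi_s = \dim_k(R/I)_s = \min\{\binom{s+2}{2},\,\ell+\binom{s+1}{2}\}$ exactly. If $\ell \leq s+1$ this equals the codomain dimension and $\Phi_s$ is surjective; but if $\ell \geq s+2$ the minimum is $\binom{s+2}{2}$, so $\Phi_s$ is \emph{injective} and fails to be surjective, making the tipping point $> s$. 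Thus ``$\Phi_s$ is surjective'' is logically equivalent to ``$\ell \leq s+1$,'' which is precisely the ``in particular'' conclusion you are trying to derive afterwards — as written, the argument is circular. Your ``main obstacle'' paragraph senses a subtlety, but locates it in the wrong place (cancellation between the $\psi$- and $\phi_j$-coordinates, sums of surjections); no Hilbert-function manipulation at $i=s$ alone can rule out $\ell \geq s+2$, and Proposition~\ref{proplol}(b) applies to a single cyclic inverse system, not to the direct sum.

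The paper eliminates $\ell \geq s+2$ by a route you do not anticipate and for which you have no substitute. It first bounds the tipping point above by $s+1$ (Lemma~\ref{upper}, a rank count on $\Phi_{s+1}$, whose codomain is just $D_{s-2}$). Then, assuming tipping point $=s+1$, it shows $\ell \geq s+2$, forces $I_s=0$ and initial degree $s+1$, proves $I_t$ is itself compressed (Lemma~\ref{iscomp}), and deduces $I = (I_t)_{\geq s+1}$. The punchline is Lemma~\ref{gortype}: using Corollary~\ref{cor:ittorrk} for the iterated trimming complex, one computes that $(I_t)_{\geq s+1}$ defines a ring of type exactly $s+2$, whereas $\type(R/I)=\ell+1\geq s+3$ — contradiction. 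That type computation, built on the trimming-complex machinery of Section~\ref{ittrimcx}, is the crux, and without it (or an equivalent) the case $\ell\geq s+2$ remains open in your proposal.
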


The proof of Theorem \ref{tippingpt} will follow after a series of lemmas that will give tight upper and lower bounds on the tipping point, forcing equality.

\begin{lemma}\label{lower}
Adopt notation and hypotheses of Setup \ref{setup2}. Then the tipping point of $I$ is $\geq s$.
\end{lemma}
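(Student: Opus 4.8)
The plan is to bound the tipping point from below by exhibiting an explicit degree $i < s$ in which the induced map $\Phi_i$ fails to be surjective, and the natural candidate is to look at the contribution of the single socle generator $\phi_t$ of degree $2s-1$ (the inverse system generator dual to $I_t$). First I would set up the inverse system: by Setup \ref{setup2} the inverse system $0:_{D(V^*)} I$ is minimally generated by elements $\phi_1, \dots, \phi_\ell$ of degree $s$ together with one element $\phi_t$ of degree $2s-1$, and the maps to analyze are
$$\Phi_i : S_i \to \Big( \bigoplus_{j=1}^\ell D_{s-i} \Big) \oplus D_{2s-1-i}.$$
The key observation is that for $i$ in the range $s \le i \le 2s-1$ the first $\ell$ summands of the codomain vanish, so $\Phi_i$ in that range is governed entirely by the contraction against $\phi_t$; and for $i < s$ we want to detect non-surjectivity. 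I would compare with the ideal $0:_{S(V)} \phi_t$: by Proposition \ref{proplol} its tipping point is $\lceil (2s-1)/2 \rceil = s$, so the map $S_i \to D_{2s-1-i}$ induced by $\phi_t$ alone is \emph{not} surjective for $i < s$.

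The main point to nail down is therefore that surjectivity of $\Phi_i$ for the full ideal $I$ forces surjectivity of each component; but surjectivity onto a direct sum does force surjectivity onto each summand (composition with the projection), so if $\Phi_i$ is surjective then in particular $S_i \to D_{2s-1-i}$ is surjective, which by the previous paragraph fails whenever $i < s$. Hence $\Phi_i$ is not surjective for $i < s$, and the first $i$ with $\Phi_i$ surjective satisfies $i \ge s$; that is exactly the assertion that the tipping point of $I$ is $\ge s$. The final sentence of Theorem \ref{tippingpt}, $\ell \le s+1$, then follows (in the companion Lemma giving the upper bound) but is not needed here.

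The step I expect to require the most care is the identification of the $\phi_t$-component of $\Phi_i$ with the map induced by the single element $\phi_t$ in the sense of Proposition \ref{proplol}: one must check that the dual-pairing conventions (contraction of $S_i$ against $D_{2s-1}$ landing in $D_{2s-1-i}$) are exactly those used in \cite{miller2018free}, and that ``$\phi_t$ is a minimal generator of degree $2s-1$ of the inverse system of $I$'' really does mean the corresponding coordinate projection of $\Phi_i$ is the $\Phi_i$ of $0:_{S(V)}\phi_t$. Once that bookkeeping is in place the argument is immediate; there is no hard estimate, only a clean reduction to the known single-socle case via Proposition \ref{proplol}.
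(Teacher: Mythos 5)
Your proof is correct, but it takes a genuinely different route from the paper's. The paper argues via initial degrees: compressedness of $R/I$ forces $I_i=0$ for $i<s$, and the dichotomy between tipping point and initial degree (the remark after Definition \ref{tippt}) reduces the claim to ruling out the single case that $\Phi_{s-1}$ is an isomorphism, which fails by a rank count. You instead compose $\Phi_i$ with the projection onto the $D_{2s-1-i}$-summand and quote the tipping point of $0:_{S(V)}\phi_t$ from Proposition \ref{proplol}. Both arguments ultimately rest on the same inequality, $\dim_k S_i=\binom{i+2}{2}<\binom{2s+1-i}{2}=\dim_k D_{2s-1-i}$ for $i\le s-1$, and your reduction is arguably the more transparent of the two; your identification of the $\phi_t$-coordinate of $\Phi_i$ with the map of Proposition \ref{proplol} is also correct, directly from the definition in \ref{notation}.

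One caution about the citation. The assertion in Proposition \ref{proplol} that the tipping point of $0:_{S(V)}\phi$ \emph{equals} $\lceil s/2\rceil$ (rather than merely being at least that) genuinely requires $0:\phi$ to define a compressed ring — for $\phi$ a pure divided power the tipping point is $s$, not $\lceil s/2\rceil$ — and in the paper's logical order the compressedness of $I_t$ is only established in Lemma \ref{iscomp}, whose proof uses Lemma \ref{lower}. So if your argument needed the full strength of that proposition for $\phi_t$, it would be circular. It does not: you only use that $S_i\to D_{2s-1-i}$ fails to be surjective for $i<s$, which is the unconditional direction and follows from the dimension count above (the domain is strictly smaller than the codomain). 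Making that count explicit, rather than citing the proposition, renders the argument airtight.
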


\begin{proof}
Adopt the notation of \ref{notation}; we may view $R$ as $S(V)$ for some $3$-dimensional vector space over $k$. By counting initial degrees, we eliminate all possibilities except for the case that $\Phi_{s-1} : S_{s-1} \to D_1^{\oplus \ell} \oplus D_s$ is an isomorphism and $I$ has initial degree $s$. Counting ranks, this implies $\ell s = 1-s \leq 0$, which is a clear contradiction.
\end{proof}

\begin{lemma}\label{iscomp}
Adopt notation and hypotheses of Setup \ref{setup2}. Then $I_t$ defines a compressed ring.
\end{lemma}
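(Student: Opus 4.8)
The plan is to show that the Gorenstein ring $R/I_t$ with socle degree $2s-1$ is compressed by verifying the Hilbert function identity of Definition \ref{compdef} directly. Since $R/I_t$ is a grade $3$ Gorenstein quotient of the $3$-variable polynomial ring, its embedding dimension is $e = 3$ and its top socle degree is $2s-1$, with socle polynomial $z^{2s-1}$ (so $c_{2s-1} = 1$ and all other $c_i = 0$). For such a ring the compressed condition reduces to $\dim_k \m^i/\m^{i+1} = \min\{\binom{2+i}{i},\ \binom{2s-1-i+2}{2s-1-i}\} = \min\{\binom{i+2}{2},\binom{2s+1-i}{2}\}$ for $i = 0,\dots,2s-1$; equivalently, $R/I_t$ has the largest possible Hilbert function among Gorenstein quotients with that socle degree. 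By Macaulay duality (the inverse system formalism of \ref{notation}), if $I_t = 0:_{S(V)} \phi_t$ for a homogeneous $\phi_t \in D(V^*)$ of degree $2s-1$, then $\dim_k (R/I_t)_i = \rank \Phi^{(t)}_i$, where $\Phi^{(t)}_i : S_i \to D_{2s-1-i}$ is the multiplication-by-$\phi_t$ map. So the claim is precisely that $\Phi^{(t)}_i$ has maximal rank $\min\{\dim S_i,\dim D_{2s-1-i}\}$ for every $i$.

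First I would extract $\phi_t$ from the hypotheses of Setup \ref{setup2}: since $I = I_1\cap\cdots\cap I_\ell \cap I_t$ with $\soc(R/I) = k(-s)^\ell \oplus k(-2s+1)$, the inverse system of $I$ is minimally generated by $\phi_1,\dots,\phi_\ell$ of degree $s$ together with $\phi_t$ of degree $2s-1$, and $I_t = 0:_{S(V)}\phi_t$. The key input is the compressedness of $R/I$ itself (part of the hypothesis). I would compare the Hilbert function constraint coming from compressedness of $R/I$ against the contribution of the single generator $\phi_t$: in the range of degrees $i$ where $2s-1-i > s$ — i.e. $i < s-1$ — the degree-$s$ generators $\phi_1,\dots,\phi_\ell$ contribute nothing to $D_{2s-1-i}$, so the surjectivity/injectivity forced by compressedness of $R/I$ in those degrees is carried entirely by $\phi_t$, giving maximal rank of $\Phi^{(t)}_i$ there. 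Then I would invoke Proposition \ref{proplol}(b): for the principal inverse system $0:_{S(V)}\phi_t$ of degree $2s-1$, the map $\Phi^{(t)}_i$ is surjective if and only if $\Phi^{(t)}_{2s-1-i}$ is injective. This self-duality propagates the maximal-rank property established in low degrees $i$ to the complementary high degrees, and the tipping point $\lceil (2s-1)/2\rceil = s$ handles the middle. Combining, $\Phi^{(t)}_i$ has maximal rank for all $i$, which is exactly the compressed condition for $R/I_t$.

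The main obstacle I anticipate is the bookkeeping at the "seam" — the degrees $i$ near $s-1$ where the degree-$s$ generators begin to interact with the target, so that compressedness of $R/I$ no longer isolates $\phi_t$ cleanly. I would handle this by checking that in those borderline degrees the relevant rank is forced either by the tipping point of $I$ (which, once Theorem \ref{tippingpt} is available, equals $s$) or by a direct dimension count: the point is that $\binom{2s+1-i}{2}$ versus $\binom{i+2}{2}$ crosses over exactly around $i = s-1$, matching where the interaction with the $\phi_j$'s turns on, so no separate degree can fail. A minor secondary point is justifying that $I_t$ is genuinely principal in the inverse system (type $1$), which follows since $R/I_t$ is Gorenstein by the decomposition in Setup \ref{setup2}. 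Apart from this, the argument is a routine application of Macaulay duality together with Proposition \ref{proplol}.
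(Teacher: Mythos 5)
Your overall framework (Macaulay duality plus the duality in Proposition \ref{proplol}) is the right one, but you run the argument in the wrong direction, and this creates a genuine gap in the low-degree range. For $i\le s-1$, compressedness of $R/I$ tells you that the full map $\Phi_i\colon S_i\to D_{s-i}^{\oplus\ell}\oplus D_{2s-1-i}$, $f\mapsto(f\phi_1,\dots,f\phi_\ell,f\phi_t)$, is injective, i.e.\ $(I_1\cap\cdots\cap I_\ell\cap I_t)_i=0$. What you need for compressedness of $R/I_t$ in those degrees is injectivity of the single component $f\mapsto f\phi_t$, i.e.\ $(I_t)_i=0$, and injectivity of a map into a direct sum does not pass to its components: a priori some nonzero $f\in S_i$ could annihilate $\phi_t$ without annihilating all of the $\phi_j$. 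So the claim that the maximal-rank property in degrees $i<s-1$ is ``carried entirely by $\phi_t$'' is unjustified, and the subsequent dualization then propagates nothing. (A secondary point: you cannot appeal to Theorem \ref{tippingpt} to handle the middle degrees, since the proof of that theorem cites this lemma; that would be circular.)

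The fix is to start at the other end, which is what the paper does. For $i>s$ the summand $D_{s-i}$ vanishes, so $\Phi_i$ literally \emph{is} the map $f\mapsto f\phi_t$; compressedness of $R/I$ together with the rank count $\dim S_i>\dim D_{2s-1-i}$ in that range forces these maps to be surjective, and surjectivity --- unlike injectivity --- is exactly the property that is inherited by the $\phi_t$-component (compose with a projection). The degree $i=s$ is covered by the tipping-point statement of Proposition \ref{proplol} applied directly to the principal inverse system $0:_{S(V)}\phi_t$, whose tipping point is $\lceil(2s-1)/2\rceil=s$. Dualizing via Proposition \ref{proplol}(b) then converts surjectivity of $f\mapsto f\phi_t$ in degrees $\ge s$ into injectivity in degrees $\le s-1$, and all degrees are accounted for.
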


\begin{proof}
Adopt the notation of \ref{notation}; we may view $R$ as $S(V)$ for some $3$-dimensional vector space over $k$. Let $\phi_i \in D_s$ denote the inverse system for each $I_i$ and $\phi_t \in D_{2s-1}$ denote the inverse system for $I_t$. By Lemma \ref{lower}, the maps $\Phi_i$ for $i \geq s$ are surjective; Proposition \ref{proplol} guarantees that the map $f \mapsto f \cdot \phi_t$ is surjective for $f \in S_s$. 

For $i>s$, the maps $\Phi_i : S_i \to D_{2s-1-i}$ are identically the maps $f \mapsto f \cdot \phi_t$ for $f \in S_i$. By assumption, these are surjections; by Proposition \ref{proplol}, $I_t$ defines a compressed ring. 
\end{proof}

\begin{lemma}\label{upper}
Adopt notation and hypotheses of Setup \ref{setup2}. Then the tipping point of $I$ is $\leq s+1$.
\end{lemma}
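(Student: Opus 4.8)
The plan is to show that the map $\Phi_{s+1}$ is surjective, which forces the tipping point to be at most $s+1$ by definition. Recall that $\Phi_{s+1} : S_{s+1} \to D^{\oplus \ell}_{s-(s+1)} \oplus D_{2s-1-(s+1)} = D_{s-2}$, since $D_{-1} = 0$; thus $\Phi_{s+1}$ is literally the map $f \mapsto f \cdot \phi_t$ for $f \in S_{s+1}$, where $\phi_t \in D_{2s-1}$ is the inverse system generator of the Gorenstein ideal $I_t$. So the task reduces to showing that multiplication $S_{s+1} \to D_{s-2}$, $f \mapsto f\cdot \phi_t$, is surjective.

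First I would invoke Proposition \ref{proplol} applied to $\phi_t$: the tipping point of $0:_{S(V)} \phi_t$ is $\lceil (2s-1)/2 \rceil = s$, so the map $\Phi^{(t)}_i : S_i \to D_{2s-1-i}$ (contraction against $\phi_t$) is surjective for all $i \geq s$. In particular it is surjective for $i = s+1$, which is exactly the statement that $\Phi_{s+1}$ is surjective. Hence the tipping point of $I$ is $\leq s+1$. (Alternatively, one can quote Lemma \ref{iscomp}, which already establishes that $I_t$ defines a compressed ring and that $f \mapsto f\cdot\phi_t$ is surjective for $f \in S_i$ with $i \geq s$; this gives the surjectivity of $\Phi_{s+1}$ immediately.)

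The only subtlety — and the one place I would be careful — is the bookkeeping of which summands of the codomain $\bigoplus_{j=1}^{\ell} D_{s_j - i} \oplus D_{2s-1-i}$ actually survive in degree $i = s+1$: the $\ell$ copies of $D_{s-i} = D_{-1}$ vanish, leaving only the single summand $D_{s-2}$ coming from $\phi_t$, so that surjectivity of $\Phi_{s+1}$ is genuinely equivalent to surjectivity of contraction against $\phi_t$ alone. Once that identification is made, there is no real obstacle: the result is a direct application of Proposition \ref{proplol}(b) (or of Lemma \ref{iscomp}). I would write the argument in two or three lines, noting the degree collapse and then citing the relevant surjectivity.
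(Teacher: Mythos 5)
Your proof is correct, but it runs in the opposite direction from the paper's. The paper argues by contradiction: if the tipping point were $\geq s+2$, then the initial degree of $I$ would be $\geq s+2$, so $\Phi_{s+1}\colon S_{s+1}\to D_{s-2}$ would be \emph{injective}, which is impossible since $\dim_k S_{s+1}=\binom{s+3}{2}>\binom{s}{2}=\dim_k D_{s-2}$. You instead prove \emph{surjectivity} of $\Phi_{s+1}$ directly by identifying it (correctly --- the summands $D_{s-(s+1)}$ vanish) with contraction against $\phi_t$ and invoking Proposition \ref{proplol}. Both routes work; yours has the mild advantage of not needing the relation between tipping point and initial degree for $I$ itself, while the paper's needs only a dimension count and no information about $\phi_t$ beyond its existence. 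One small point to make explicit in your write-up: Proposition \ref{proplol} literally says the tipping point of $0:_{S(V)}\phi_t$ is $s$, i.e.\ that $i=s$ is the \emph{first} degree in which contraction is surjective; to get surjectivity at $i=s+1$ you should either note that surjectivity propagates upward (since $S_{i+1}\cdot\phi_t=S_1\cdot(S_i\cdot\phi_t)$ and $S_1\cdot D_j=D_{j-1}$), or apply Proposition \ref{proplol}(b) to reduce to injectivity of $\Phi_{s-2}^{(t)}$, which holds because $(I_t)_{s-2}=0$ (the initial degree of $I_t$ is at least its tipping point $s$). With that one line added, the argument is complete.
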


\begin{proof}
Suppose for sake of contradiction that the tipping point is $\geq s+2$. Adopt the notation of \ref{notation}; we may view $R$ as $S(V)$ for some $3$-dimensional vector space over $k$. Let $\phi_i \in D_s$ denote the inverse system for each $I_i$ and $\phi_t \in D_{2s-1}$ denote the inverse system for $I_t$. 

By Proposition \ref{proplol}, $I_t$ has tipping point $s$. If $I$ has tipping point $\geq s+2$, then $\Phi_{s+1} : S_{s+1} \to D_{s-2}$ is injective; this is impossible by counting ranks.
\end{proof}

\begin{lemma}\label{gortype}
Adopt notation and hypotheses of Setup \ref{setup2}. Let $\phi_1 , \dots , \phi_{s+1}, \psi_1 , \dots , \psi_b$ denote a minimal generating set for $I_t$, where $\deg \phi_i = s$, $\deg \psi_i = s+1$. Then the ideal
$$(\phi_1 , \dots , \phi_{s+1-\ell} , \psi_1 , \dots , \psi_b)+ R_+\phi_{s+2-\ell} + \cdots + R_+ \phi_{s+1}$$
defines a ring of type $\ell+1$. In particular, $(I_t)_{\geq s+1}$ defines a ring of type $s+2$.
\end{lemma}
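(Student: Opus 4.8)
The key structural fact I would exploit is Proposition~\ref{proplol} applied to $\phi_t \in D_{2s-1}$: the ideal $I_t = 0:_{S(V)}\phi_t$ is Gorenstein with tipping point $s$, and the maps $\Phi_i^{(t)}\colon S_i \to D_{2s-1-i}$ satisfy duality $\hom_k(\Phi_i^{(t)},k) = \Phi_{2s-1-i}^{(t)}$. The socle of the quotient by the proposed ideal is governed by an inverse system: passing to inverse systems converts the operation ``$(\phi_1,\dots,\phi_{s+1-\ell},\psi_1,\dots,\psi_b) + R_+\phi_{s+2-\ell}+\cdots+R_+\phi_{s+1}$'' into an explicit submodule of $D(V^*)$, and the type of the quotient equals the minimal number of generators of that submodule.

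First I would set up the inverse-system dictionary. The generators $\phi_1,\dots,\phi_{s+1}$ of $I_t$ in degree $s$ are exactly a basis for the image of the dual map; more precisely, under the identification $D_s \cong \hom_k(S_s,k)$, the degree-$s$ part of $I_t$ corresponds to the annihilator of $\phi_t$ acted on by $S_s$, and the quotient $S_s/(I_t)_s$ is dual to $S_s\cdot\phi_t \subseteq D_{s-1}$. The point is that trimming $I_t$ by replacing $\phi_{s+2-\ell},\dots,\phi_{s+1}$ with $R_+\phi_{s+2-\ell},\dots,R_+\phi_{s+1}$ has the effect, on inverse systems, of \emph{adding} $\ell$ new generators in degree $s-1$ to the inverse system of $I_t$ (together with keeping $\phi_t$), because $0:_{S(V)}(J + R_+\phi_{j}) $ picks up the degree-$(s-1)$ elements dual to $\phi_j$. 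I would verify that the resulting inverse system is minimally generated by $\phi_t$ in degree $2s-1$ together with $\ell$ linearly independent elements of $D_{s-1}$, hence the quotient ring has type exactly $\ell+1$. The linear independence of those $\ell$ new generators modulo the submodule generated by $\phi_t$ is where compressedness of $I_t$ (Lemma~\ref{iscomp}) and the duality in Proposition~\ref{proplol} enter: surjectivity of $\Phi_s^{(t)}$ is equivalent to injectivity of $\Phi_{s-1}^{(t)}$, so distinct $\phi_i$'s give distinct, independent contributions and none of them lies in $S_{\ge 1}\cdot \phi_t$ at the relevant degree.

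For the final sentence, I would apply the first part with $\ell = s+1$: then $s+1-\ell = 0$, so the ideal is simply $(\psi_1,\dots,\psi_b) + R_+\phi_1 + \cdots + R_+\phi_{s+1}$, which is exactly $(I_t)_{\ge s+1}$ (the subideal generated by all elements of $I_t$ of degree $\ge s+1$, since the $\psi_i$ are the degree-$(s+1)$ generators and $R_+\phi_i$ supplies the degree-$(s+1)$ multiples of the degree-$s$ generators). The first part then gives type $(s+1)+1 = s+2$.

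The main obstacle I anticipate is the minimality claim: showing that the $\ell$ newly-created inverse-system generators in degree $s-1$ are genuinely minimal generators, i.e.\ that they are not redundant either among themselves or against $S_{\ge 1}\cdot\phi_t$. This requires knowing that $\Phi_s^{(t)}\colon S_s \to D_{s-1}$ is surjective (so its dual $\Phi_{s-1}^{(t)}$ is injective), together with a careful count showing that the span of $S_1\cdot\phi_t$ in $D_{s-2}$ does not already ``cover'' the derivatives of the new degree-$(s-1)$ generators in a way that would make one of them redundant. Everything else is bookkeeping with the inverse-system correspondence and the explicit bidegree structure that compressedness forces.
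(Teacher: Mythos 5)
Your inverse-system approach differs genuinely from the paper's: the paper identifies the trimmed ideal with the $J$ of Setup \ref{setup4}, with each $G^j_\bullet$ the Koszul complex resolving $R/R_+$, checks by a degree count that $q_2^j\otimes k = 0$, and reads off $\dim_k\tor_3^R(R/J,k)=\rank F_3 + \ell\cdot\rank K_3 = \ell+1$ directly from Corollary \ref{cor:ittorrk}. Your Macaulay-duality route is legitimate, but it contains a degree error. The $\ell$ new inverse-system generators live in degree $s$, not $s-1$: since $\phi_j\notin J$ while $R_+\phi_j\subseteq J$, each trimmed $\phi_j$ produces a socle element $\phi_j+J$ of $R/J$ in degree $\deg\phi_j=s$, and under the Macaulay correspondence socle elements in degree $d$ match minimal generators of $0:_{D(V^*)}J$ in degree $d$ (this is also what Setup \ref{setup2} records: $\soc(R/I)=k(-s)^\ell\oplus k(-2s+1)$). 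The space $D_{s-1}$ is the codomain of $\Phi_s^{(t)}$, not where the new generators sit; the injective map relevant to your minimality argument is $\Phi_{s-1}^{(t)}\colon S_{s-1}\to D_s$, whose image is $\langle\phi_t\rangle_s\subseteq D_s$.

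A second gap is that you argue minimality of the $\ell+1$ proposed generators but not generation, while the lemma asserts type \emph{exactly} $\ell+1$. One still needs $(0:_D J)_d=\langle\phi_t\rangle_d$ for all $d\geq s+1$: for $\psi\in D_d$ with $d>s$ one has $\phi_i\psi\in D_{d-s}$ of positive degree, so the trimming condition $R_+(\phi_i\psi)=0$ forces $\phi_i\psi=0$ (as $S_1$ acts faithfully on $D_{>0}$), hence $\psi$ already annihilates $I_t$. Once both directions are in place, the count $\dim_k(0:_D J)_s - \dim_k\langle\phi_t\rangle_s = \binom{s+2}{2}-(s+1-\ell)-\binom{s+1}{2}=\ell$ gives exactly $\ell$ new generators in degree $s$, and together with $\phi_t$ in degree $2s-1$ the type is $\ell+1$; the last sentence then follows with $\ell=s+1$ as you observe.
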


\begin{remark}
By Proposition $3.3$ of \cite{vandebogert2019structure} combined with Lemma \ref{iscomp}, $I_t$ is minimally generated by homogeneous forms $\phi_1 , \dots , \phi_{s+1}$, $\psi_1 , \dots , \psi_b$, where $\deg \phi_i = s$, $\deg \psi_i = s+1$ and $b <s+1$, so it makes sense to choose a generating set as in the statement of Lemma \ref{gortype}.
\end{remark}

\begin{proof}
This is a consequence of Corollary \ref{cor:ittorrk}. Let $J:=(\phi_1 , \dots , \phi_{s+1-\ell} , \psi_1 , \dots , \psi_b)+ R_+\phi_{s+2-\ell} + \cdots + R_+ \phi_{s+1}$. In the notation of Theorem \ref{thm:itres}, notice that $G_\bullet^j = K_\bullet$, the Koszul complex resolving $R_+$, for all $j=1,\dots , i$. Counting degrees on the diagram of Proposition \ref{prop:ittheyexist}, we find
$$\deg q_2^j \geq s-1 > 0,$$
so that $q_2^j \otimes k = 0$, for all $j=1, \dots , i$. Since $q_3^j = 0$ for each $j=1, \dots , i$, Corollary \ref{cor:ittorrk} implies that
\begin{equation*}
    \begin{split}
        \dim_k \tor_3^R (R/J, k) &= \rank F_3 + \sum_{j=1}^i \rank K_3 \\
        &= i+1. \\
    \end{split}
\end{equation*}
\end{proof}

\begin{proof}[Proof of Theorem \ref{tippingpt}]
By Lemmas \ref{upper} and \ref{lower}, we only need to check that the tipping point cannot equal $s+1$. Assume that $I$ has tipping point $=s+1$. This implies that the map
$$\Phi_s : S_s \to D_0^{\oplus \ell} \oplus D_{s-1}$$
is an injection. Counting ranks, $\ell+\binom{s+1}{2} \geq \binom{s+2}{2}$. If there is equality, then $\Phi_s$ is an isomorphism, implying that the tipping point is $\leq s$. Thus there is strict inequality, and $\ell \geq s+2$.

This implies $I$ has type $\geq s+3$ and initial degree $s+1$. However, Lemma \ref{iscomp} forces $I_t$ to be compressed. Counting ranks in each homogeneous component and using the definition of compressed, we must have $I = (I_t)_{\geq s+1}$. By Lemma \ref{gortype}, $I$ has type $s+2$; this contradiction yields the result.
\end{proof}

\begin{cor}\label{intcomp}
Adopt notation and hypotheses of Setup \ref{setup2}. Then for each $i \leq \ell$, the ideal
$$I_i \cap \cdots \cap I_\ell \cap I_t$$
defines a compressed ring with socle $k(-s)^{\ell - i} \oplus k(-2s+1)$. 
\end{cor}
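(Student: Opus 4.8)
The plan is to reduce everything to the statement for $I$ itself (Theorem \ref{tippingpt} together with Lemmas \ref{lower}, \ref{upper}, \ref{iscomp}) by an induction on $\ell - i$, peeling off one Gorenstein factor $I_j$ of socle degree $s$ at a time. The base case $i = \ell$ is exactly the assertion that $I_\ell \cap I_t$ defines a compressed ring with socle $k(-s) \oplus k(-2s+1)$, which is an instance of Setup \ref{setup2} with $\ell$ replaced by $1$; this is precisely the situation treated in \cite{vandebogert2019structure}, and in any case it falls under the hypotheses we are allowed to assume. For the inductive step, set $I^{(i)} := I_i \cap \cdots \cap I_\ell \cap I_t$ and note $I^{(i)} = I_i \cap I^{(i+1)}$, where by induction $I^{(i+1)}$ defines a compressed ring with socle $k(-s)^{\ell-i-1} \oplus k(-2s+1)$.

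The first step is to pass to inverse systems in the sense of \ref{notation}: viewing $R = S(V)$ with $\dim_k V = 3$, the ideal $I^{(i)}$ corresponds to the submodule of $D(V^*)$ generated by $\phi_i, \phi_{i+1}, \dots, \phi_\ell$ (each of degree $s$) and $\phi_t$ (of degree $2s-1$), where the $\phi_j$ are the generators of the inverse systems of the $I_j$. One must first check that these $\ell - i + 1$ forms in degree $s$ together with $\phi_t$ in degree $2s-1$ are a \emph{minimal} generating set of the inverse system of $I^{(i)}$, i.e.\ that $\soc(R/I^{(i)}) = k(-s)^{\ell-i}\oplus k(-2s+1)$; this is immediate because $\soc(R/(A\cap B)) \supseteq \soc(R/A)\oplus\soc(R/B)$ modulo overlaps and the socle of $R/I$ is already known to be the full direct sum $k(-s)^\ell\oplus k(-2s+1)$, so no collapsing can occur in any intermediate intersection. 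Then the relevant maps $\Phi^{(i)}_j : S_j \to (D_{s-j})^{\oplus(\ell-i)} \oplus D_{2s-1-j}$ are the compositions of $\Phi_j$ (for the full $I$) with the coordinate projection that forgets the components corresponding to $I_{i+1}\cap\cdots$ wait — more precisely, $\Phi^{(i)}_j$ is a coordinate restriction of $\Phi_j$, obtained by deleting the $D_{s-j}$-summands indexed by $\{1,\dots,i-1\}$.

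The second step is the rank count: compressedness of $R/I^{(i)}$ is equivalent, by Macaulay duality (the characterization underlying Definition \ref{compdef} and used repeatedly in the lemmas above), to the assertion that $\Phi^{(i)}_j$ has maximal rank for every $j$ — surjective when $\dim S_j \geq \dim\big((D_{s-j})^{\oplus(\ell-i)}\oplus D_{2s-1-j}\big)$ and injective otherwise. For $j \geq s$ this is inherited directly from the corresponding statement for $I$ (the codomain only shrinks as we delete summands, so surjectivity of $\Phi_j$ forces surjectivity of $\Phi^{(i)}_j$), and Theorem \ref{tippingpt} tells us the tipping point of $I$ is $s$, so $\Phi_s$, hence $\Phi^{(i)}_s$, is already surjective; thus the tipping point of $I^{(i)}$ is $\leq s$, while Lemma \ref{lower} (whose proof uses only the shape of the socle, and so applies verbatim to $I^{(i)}$ once we know its socle) gives tipping point $\geq s$. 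For $j < s$ one needs injectivity of $\Phi^{(i)}_j$; here the key observation is that $\Phi^{(i)}_j$ has the component $f \mapsto f\cdot\phi_t$ (the inverse system map for $I_t$) as a coordinate projection, and $I_t$ is compressed by Lemma \ref{iscomp}, so that component is already injective for $j \leq s-1$ by Proposition \ref{proplol}(b) — this forces $\Phi^{(i)}_j$ injective. Combining the two ranges shows every $\Phi^{(i)}_j$ has maximal rank, which is exactly compressedness of $R/I^{(i)}$ with the asserted socle.

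The main obstacle I anticipate is the bookkeeping in the second step: one must make sure that the maximal-rank condition holds in the transitional degrees near $j = s$, where both the domain dimension $\binom{j+2}{2}$ and the codomain dimension $(\ell-i)\binom{s-j+2}{2} + \binom{2s-1-j+2}{2}$ vary with $i$, and check that "maximal rank of the $\phi_t$-component plus maximal rank of $\Phi_s$" genuinely suffices to pin down maximal rank of $\Phi^{(i)}_j$ for every single $j$, rather than merely generic $j$. This is the same style of degree/rank accounting carried out in Lemmas \ref{lower}–\ref{upper} and in the proof of Theorem \ref{tippingpt}, so I expect it to go through, but it is where the real content lies; everything else is formal manipulation of intersections and inverse systems.
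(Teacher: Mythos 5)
Your proposal is correct and follows essentially the same route as the paper: identify $\Phi_j'$ for the intermediate intersection as a coordinate restriction of $\Phi_j$, get surjectivity for $j \geq s$ from Theorem \ref{tippingpt}, and get injectivity for $j < s$ from the $\phi_t$-component via Lemma \ref{iscomp}. The induction on $\ell - i$ and the worry about transitional degrees are unnecessary (either injectivity or surjectivity alone already gives maximal rank in a given degree), but they do no harm.
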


\begin{proof}
Adopt the notation of \ref{notation}; we may view $R$ as $S(V)$ for some $3$-dimensional vector space over $k$. Let $\phi_i \in D_s$ denote the inverse system for each $I_i$ and $\phi_t \in D_{2s-1}$ denote the inverse system for $I_t$.

For $j<s$, the map $f \mapsto f \cdot \phi_t$ is injective, since $I_t$ defines a compressed ring by Lemma \ref{iscomp}. Similarly, for $j \geq s$, the map $\Phi_j : S_j \to D_j^{\oplus \ell} \oplus D_{2s-1-j}$ associated to the ideal $I$ is surjective. The map $\Phi_j'$ associated to the ideal $I_i \cap \cdots \cap I_\ell \cap I_t$ is the composition of $\Phi_j$ with the canonical projection $D_j^{\oplus \ell} \oplus D_{2s-1-j} \to D_j^{\oplus \ell-i} \oplus D_{2s-1-j}$. As a composition of surjections, $\Phi_j'$ is a surjection for $j \geq s$. 
\end{proof}

The following Proposition provides us with a generating set for ideals as in Corollary \ref{intcomp} that is surprisingly simple. In other words, it says that any ideal defining a compressed ring with socle $k(-s)^\ell \oplus k (-2s+1)$ is obtained as an \emph{iterated} trimming of a Gorenstein ideal. Corollary \ref{cor:ittrimresolvesyou} is an immediate consequence of this observation.

\begin{prop}\label{genset}
Adopt notation and hypotheses of Setup \ref{setup2}. Then there exists a minimal generating set 
$$\phi_1 , \dots , \phi_{s+1}, \ \psi_1 , \dots , \psi_b$$
for $I_t$ such that
$$I=(\phi_1 , \dots , \phi_{s+1-\ell} , \psi_1 , \dots , \psi_b)+ R_+\phi_{s+2-\ell} + \cdots + R_+ \phi_{s+1},$$
where $\deg \phi_i = s$, $\deg \psi_i = s+1$, and $b<s+1$.
\end{prop}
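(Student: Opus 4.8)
The plan is to leverage the inverse systems machinery together with the results already established about compressed rings of higher type. Recall from Lemma \ref{iscomp} that $I_t$ defines a compressed ring, so by Proposition $3.3$ of \cite{vandebogert2019structure} it is minimally generated by forms $\phi_1, \dots, \phi_{s+1}$ of degree $s$ together with forms $\psi_1, \dots, \psi_b$ of degree $s+1$ with $b < s+1$. Working in the notation of \ref{notation} with $R = S(V)$, the degree $s$ generators $\phi_i$ correspond to a basis of the cokernel of the multiplication map $\Phi_s^{I_t}: S_s \to D_{s-1}$ (equivalently, via Proposition \ref{proplol}, to the kernel of the dual map). The key observation is that the socle element of $R/I$ in degree $s$ — which is $\ell$-dimensional — is detected precisely by which of these $\phi_i$ survive upon passing from $I_t$ to $I$: the inverse system of $I$ is $\langle \phi_I \rangle + N$ where $\phi_I \in D_{2s-1}$ is the dual socle generator of $I_t$ and $N \subseteq D_s$ is an $\ell$-dimensional subspace.

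The main step is to choose the basis $\phi_1, \dots, \phi_{s+1}$ of the relevant cokernel/kernel adaptively so that $\phi_{s+2-\ell}, \dots, \phi_{s+1}$ span exactly the ``new'' socle directions $N$. Concretely: the ideal $I = I_1 \cap \cdots \cap I_\ell \cap I_t$ has inverse system $N + \langle \phi_t \rangle$ where $N = \langle \psi_1^*, \dots, \psi_\ell^* \rangle \subseteq D_s$ is the span of the dual socle generators of $I_1, \dots, I_\ell$, and by Corollary \ref{intcomp} (applied inductively) the relevant maps behave well. Dualizing, $I_s = ((I_t)_s \cap (\text{annihilator of } N))$ — that is, in degree $s$ the ideal $I$ consists of those degree $s$ forms in $I_t$ whose action on $N$ vanishes. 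Since $I_t$ is compressed, $(I_t)_s$ has codimension $s+1$ in $S_s$ (it is the kernel of the surjection $\Phi_s^{I_t}: S_s \twoheadrightarrow D_{s-1}$), and imposing vanishing on the $\ell$-dimensional space $N$ — where $N$ is a subspace of the $(s+1)$-dimensional socle $\soc(R/I_t)_s$ — cuts this down to codimension $s+1+\ell$ (the count here uses that $N$ injects into the dual of $(I_t)_s / (\text{something})$; this is where Theorem \ref{tippingpt} and the tipping point being exactly $s$ enter, guaranteeing the expected codimension and no collapse). Thus one may extend a basis $\phi_1, \dots, \phi_{s+1-\ell}$ of $(I_t)_s \cap \ann(N)$ (modulo $R_+ \cdot (I_t)_{s-1}$) to a full minimal generating set $\phi_1, \dots, \phi_{s+1}$ of the degree $s$ part of $I_t$, with the last $\ell$ of them pairing nondegenerately with $N$.

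Once the basis is chosen, the identification $I = (\phi_1, \dots, \phi_{s+1-\ell}, \psi_1, \dots, \psi_b) + R_+ \phi_{s+2-\ell} + \cdots + R_+\phi_{s+1}$ follows by comparing the two ideals degree by degree. In degrees $< s$ both sides are zero. In degree $s$, the right-hand side contributes exactly $\langle \phi_1, \dots, \phi_{s+1-\ell}\rangle$ (the $R_+\phi_i$ terms contribute nothing in degree $s$), which by construction equals $(I_t)_s \cap \ann(N) = I_s$. In degree $\geq s+1$: the right-hand side contains $R_+ \cdot (I_t)_s + (I_t)_{s+1} = (I_t)_{\geq s+1}$ in those degrees (using that $\psi_1, \dots, \psi_b$ together with $R_+\phi_1, \dots, R_+\phi_{s+1}$ generate $(I_t)_{\geq s+1}$), and $(I_t)_{\geq s+1} \subseteq I$ since in degrees $\geq s+1$ the inverse system condition for $I$ is weaker than for $I_t$ (indeed $\Phi_j$ for $I$ is a projection of $\Phi_j$ for $I_t$ for $j > s$, per the argument in Corollary \ref{intcomp}); conversely $I_{\geq s+1} \subseteq (I_t)_{\geq s+1}$ needs checking — but $I \subseteq I_t$ is immediate from $I = I_1 \cap \cdots \cap I_\ell \cap I_t$, so $I_{\geq s+1} \subseteq (I_t)_{\geq s+1}$, and then the reverse containment just shown gives equality. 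Finally $b < s+1$ is inherited from the generation of $I_t$.

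\textbf{Main obstacle.} The delicate point is the dimension count in degree $s$: one must verify that imposing the $\ell$ vanishing conditions coming from $N$ on the space $(I_t)_s$ genuinely drops the dimension by exactly $\ell$ and that the resulting $\phi_{s+2-\ell}, \dots, \phi_{s+1}$ can be completed to a \emph{minimal} generating set of $I_t$ (i.e., they remain linearly independent modulo $R_+ \cdot (I_t)_{s-1}$, which is zero here since $(I_t)_{s-1} = 0$, but one still needs independence from $\phi_1, \dots, \phi_{s+1-\ell}$ and that the total count is exactly $s+1$). This is exactly where the sharp value of the tipping point from Theorem \ref{tippingpt} and the compressed hypothesis must be used to rule out any degeneracy; the rest of the argument is a formal degree-by-degree comparison.
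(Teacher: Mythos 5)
Your argument is correct and follows the same route as the paper's proof: both rest on the dimension count $\dim_k I_s = s+1-\ell$ (forced by compressedness together with $\ell \leq s+1$ from Theorem \ref{tippingpt}), extending a basis of $I_s$ to a basis of the $(s+1)$-dimensional space $(I_t)_s$, and the identification $I_{\geq s+1} = (I_t)_{\geq s+1}$ coming from the inverse system. The paper simply phrases the degree-$s$ count directly via Definition \ref{compdef} rather than through the annihilator of the span $N$ of dual socle generators, and compresses your degree-by-degree comparison into the observation that a compressed ring has all minimal generators in two consecutive degrees; the content is the same.
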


\begin{proof}
Observe that, by definition of compressed,
$$\dim_k (I)_s = s+1- \ell.$$
Choose a basis $\phi_1 , \dots , \phi_{s+1-\ell}$ for $I_s$; notice that $\dim_k (I_t)_s = s+1$, so we may extend this set to a basis
$$\phi_1 , \dots , \phi_{s+1}$$
for $(I_t)_s$. Since $I_{\geq s+1} = (I_t)_{\geq s+1}$, there exist elements $\psi_1 , \dots , \psi_b \in (I_t)_{s+1}$ such that
$$I_{s+1} = (R_+(\phi_1 , \dots , \phi_{s+1}))_{s+1} + \Span_k \{ \psi_1 , \dots , \psi_b \}.$$
In particular, the assumption that $I$ defines a compressed ring forces every minimal generating set to be concentrated in two consecutive degrees. This immediately yields that
$$I=(\phi_1 , \dots , \phi_{s+1-\ell} , \psi_1 , \dots , \psi_b)+ R_+\phi_{s+2-\ell} + \cdots + R_+ \phi_{s+1}.$$
\end{proof}

\begin{cor}\label{cor:ittrimresolvesyou}
Adopt notation and hypotheses of Setup \ref{setup2}. Then the complex of Theorem \ref{thm:itres} is a free resolution of $R/I$. Moreover, if $s$ is even and $I_t$ is generic, this resolution is minimal.
\end{cor}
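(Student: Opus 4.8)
The plan is to apply Theorem \ref{thm:itres} directly to the data coming from Proposition \ref{genset}. By that proposition, we may write $I = K' + R_+\phi_{s+2-\ell} + \cdots + R_+\phi_{s+1}$, where $K' = (\phi_1,\dots,\phi_{s+1-\ell},\psi_1,\dots,\psi_b)$ and $\phi_1,\dots,\phi_{s+1},\psi_1,\dots,\psi_b$ is a minimal generating set for the Gorenstein ideal $I_t$. So I would take $(F_\bullet,d_\bullet)$ to be a minimal homogeneous free resolution of $R/I_t$ (which is self-dual of length $3$, by Buchsbaum--Eisenbud), set $m = \ell$, and let $Re_0^1,\dots,Re_0^\ell$ be the rank-one free summands of $F_1$ corresponding to the generators $\phi_{s+2-\ell},\dots,\phi_{s+1}$; the remaining summands form $F_1'$. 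For each $i$ one takes $\mfa_i = R_+$ — this is legitimate since $d_0^i(F_2) \subseteq R_+ e_0^i$ as the resolution is minimal — and $G^i_\bullet$ the Koszul complex on $x,y,z$ resolving $R/R_+$. With these choices the ideal $J$ of Setup \ref{setup4} is exactly $K' + R_+ K_0^1 + \cdots + R_+ K_0^\ell = I$, and Theorem \ref{thm:itres} immediately gives that the mapping cone \eqref{itcomx} is a free resolution of $R/I$. This establishes the first assertion.

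For minimality, assume $s$ is even and $I_t$ generic. The mapping cone is minimal precisely when all the vertical comparison maps $q^i_k$ in diagram \eqref{itcomx} have entries in $R_+$, i.e. reduce to zero after $\otimes_R k$; equivalently, by Corollary \ref{cor:ittorrk}, when $\mu(I) = \mu(K') - \ell + \ell\cdot\mu(R_+) = (s+1-\ell+b) - \ell + 3\ell = s+1 + 2\ell + b$ and the Betti numbers in higher homological degrees add the way the formula predicts with all rank terms vanishing. The degree bookkeeping is what makes this work: since $s$ is even the tipping point $s$ of $I_t$ gives $\lceil s/2\rceil$-type vanishing, and more to the point the maps $q^i_k\colon F_{k+1}\to G^i_k$ are graded of a degree one computes from the (known, by genericity and self-duality) graded Betti table of $I_t$ together with $\deg e_0^i = s$ versus the internal degrees of $F_2,F_3$; I would check in each homological slot $k = 1,2$ that this degree is strictly positive, so $q^i_k\otimes k = 0$. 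The case $k=1$ is the delicate one: one needs $\deg q_1^i > 0$, which amounts to showing the generators $\phi_{s+2-\ell},\dots,\phi_{s+1}$ do not participate in any linear syzygy of $I_t$ — equivalently that the relevant strand of the second syzygy module of $R/I_t$ sits in internal degree $> s+1$. This should follow from the explicit shape of the generic resolution of $R/I_t$ (a compressed Gorenstein ring of even socle degree $2s-1$), whose graded Betti numbers are forced by the Hilbert function; the parity hypothesis on $s$ is exactly what rules out a "middle" linear syzygy in degree $s+1$.

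The main obstacle, then, is the last point: verifying that no $q^i_k$ has a scalar entry, i.e. pinning down the graded Betti table of the generic compressed Gorenstein quotient $R/I_t$ with socle degree $2s-1$ sharply enough to read off all the relevant internal degrees, and confirming that $s$ even forces every comparison-map degree to be positive. Once that degree count is in hand, minimality of the mapping cone is formal. I would organize the write-up as: (1) invoke Proposition \ref{genset} to identify the data of Setup \ref{setup4}; (2) apply Theorem \ref{thm:itres} for the resolution statement; (3) record the graded Betti table of the generic $R/I_t$ and the internal degrees of $F_1',F_2,F_3$; (4) compute $\deg q^i_1$ and $\deg q^i_2$ and check positivity using $s$ even; (5) conclude via the vanishing $q^i_k\otimes k = 0$ that the mapping cone has differential in $R_+$, hence is minimal.
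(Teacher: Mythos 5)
Your proposal is correct and follows essentially the same route as the paper: identify the data of Setup \ref{setup4} via Proposition \ref{genset} (with $\mfa_i = R_+$ and $G^i_\bullet$ the Koszul complex), invoke Theorem \ref{thm:itres} for acyclicity, and reduce minimality to the vanishing of $q^i_k \otimes k$ by a degree count. The one ingredient you flag as the ``main obstacle'' --- that for $s$ even and $I_t$ generic the minimal presenting matrix of $I_t$ is purely quadratic, so no chosen $\phi_j$ participates in a linear syzygy --- is exactly what the paper supplies by citing Proposition 3.4 of \cite{vandebogert2019structure}, after which the positivity of $\deg q^i_k$ is immediate.
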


\begin{proof}
By Proposition $3.4$ of \cite{vandebogert2019structure}, a generic choice of $I_t$ for $s$ even will have quadratic minimal presenting matrix. In the notation of Theorem \ref{thm:itres}, each $G^i_\bullet$ is the Koszul complex resolving $k$, and $F_\bullet$ is the minimal free resolution of $I_t$. Counting degrees on the diagrams of Propositions \ref{prop:it1stmap} and \ref{prop:ittheyexist}, each $q_k^i$ has entries generated in positive degree for $1 \leq i \leq \ell$, whence $q_k^i \otimes k = 0$. 
\end{proof}

\section{Tor Algebra Structures for Higher Type Ideals}\label{toralgst}

In this section, we generalize the results of \cite{vandebogert2019structure} to the case of higher type. Proposition \ref{classg} is the main result of this section and will be used to construct interesting examples of rings with Tor algebra class $G(r)$ in Section \ref{sec:realizability}, but most of the work done for this result is contained in the proof of Theorem \ref{toralg}. To begin the section, we recall the definition of the Tor algebra class $G(r)$:

\begin{definition}[\cite{avramov1988poincare}, Theorem $2.1$]\label{def:classg}
Let $(R,\m,k)$ be a regular local ring with $I\subset \m^2$ and ideal such that $\pd_R (R/I) = 3$. Let $T_\bullet := \tor_\bullet^R (R/I , k)$. Then $R/I$ has Tor algebra class $G(r)$ if, for $m = \mu(I)$ and $t = \type (R/I)$, there exist bases for $T_1$, $T_2$, and $T_3$
$$e_1 , \dots , e_m, \quad f_1 , \dots , f_{m+t-1} , \quad g_1 , \dots , g_t,$$
respectively, such that the only nonzero products are given by
$$e_i f_i = g_1 = f_i e_i, \quad 1 \leq i \leq r.$$
Such a Tor algebra structure has
$$T_1 \cdot T_1 = 0, \quad \rank_k (T_1 \cdot T_2 ) = 1, \quad \rank_k (T_2 \to \hom_k (T_1 , T_3) ) = r,$$
where $r \geq 2$.
\end{definition}

The proof of Theorem \ref{toralg} is essentially that of \cite[Theorem 2.4]{christensen2019trimming} but in iterated form.
\begin{theorem}\label{toralg}
Adopt the notation and hypotheses of Setup \ref{setup2}. Then the rank of the induced map
$$\delta_I : \tor_2^R (R/I , k) \to \hom_k (\tor_1^R (R/I , k) , \tor_3^R (R/I , k) )$$
is at least $\mu(I) - 3\ell$. 
\end{theorem}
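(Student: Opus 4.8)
The plan is to analyze the multiplicative structure on $T_\bullet = \tor_\bullet^R(R/I,k)$ via the explicit iterated trimming resolution provided by Theorem \ref{thm:itres} and Corollary \ref{cor:ittrimresolvesyou}. Recall from Proposition \ref{genset} that $I = (\phi_1,\dots,\phi_{s+1-\ell},\psi_1,\dots,\psi_b) + R_+\phi_{s+2-\ell} + \cdots + R_+\phi_{s+1}$, so in the notation of Setup \ref{setup4} we have $m = \ell$ trimmed generators (the $\phi_{s+2-\ell},\dots,\phi_{s+1}$), each $\mfa_i = R_+$, and each $G^i_\bullet$ the Koszul complex $K_\bullet$ on $x,y,z$. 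The mapping cone description of the resolution gives us a concrete handle on cycles representing basis elements of $T_1$ and $T_2$, and on which products land in $T_3$. The key point, exactly as in \cite[Theorem 2.4]{christensen2019trimming}, is that the ``old'' part of the resolution coming from $F_\bullet$ (the minimal resolution of $R/I_t$, a Gorenstein ring) contributes a Poincar\'e-duality-type pairing: there are $\mu(I_t) = s+1+b$ generators of $I_t$, each of which multiplies its Koszul-dual partner in $T_2^{old}$ to hit the socle generator $g \in T_3$ corresponding to the top class of the Gorenstein resolution $F_\bullet$. Thus the map $\delta_{I_t}$ induced on the subalgebra generated by $F_\bullet$-classes already has rank $\mu(I_t)$.

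\textbf{Main steps.} First I would set up bases: using the mapping cone, $T_1 \otimes k$ splits as the classes of $\phi_1,\dots,\phi_{s+1-\ell},\psi_1,\dots,\psi_b$ (from $F_1'$) together with the $3\ell$ classes $x\phi_j, y\phi_j, z\phi_j$ for $j = s+2-\ell,\dots,s+1$ (from $\bigoplus G^i_1$); similarly identify $T_2 \otimes k$ and pin down the socle class $g \in T_3$ that comes from the degree-3 part $F_3$ of the Gorenstein resolution $F_\bullet$. Second, I would transport the DG-algebra structure on the minimal resolution of $R/I_t$ (which exists by Buchsbaum--Eisenbud) through the comparison maps $q^i_k$ to get products on the iterated trimming complex; the essential input is that for a Gorenstein quotient of projective dimension 3 the pairing $T_1^{I_t} \times T_2^{I_t} \to T_3^{I_t} \cong k$ is perfect, so every one of the $\mu(I_t)$ generators pairs nontrivially with a partner into $g$. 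Third, I would argue that passing from $I_t$ to $I$ only ``damages'' this pairing in a controlled way: the $3\ell$ new $T_1$-classes $x\phi_j,y\phi_j,z\phi_j$ and the loss of the $\ell$ generators $\phi_{s+2-\ell},\dots,\phi_{s+1}$ from the old generating set can destroy at most $3\ell$ rows/columns of the pairing matrix (each trimmed generator $\phi_j$ is replaced by three new generators, and only the interaction with these can fail to see $g$). Since $\mu(I) = \mu(I_t) - \ell + 3\ell - (\text{possible scalar cancellations}) $ — and more precisely $\mu(I_t) = s+1+b$ while $\mu(I) = (s+1-\ell) + b + 3\ell = \mu(I_t) + 2\ell$ when no cancellation occurs, and $\mu(I_t)-\ell$ of these generators retain their old perfect-pairing partner — the rank of $\delta_I$ restricted to the partner classes is at least $\mu(I_t) - \ell = \mu(I) - 3\ell$ (adjusting by the same bookkeeping when the $q^i_1 \otimes k$ are nonzero, using Corollary \ref{cor:ittorrk}).

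\textbf{Expected obstacle.} The genuinely delicate part is the third step: making precise that trimming destroys \emph{at most} $3\ell$ dimensions of the pairing, i.e., controlling the interaction between the new Koszul summands $\bigoplus_i G^i_\bullet$ and the old classes under the transported multiplication. One must check that products of two ``new'' classes, or of a new class with an old class, either vanish in $T_3$ or land in the span of $g$ in a way that does not decrease the rank below $\mu(I_t)-\ell$; this requires tracking the comparison maps $q^i_1, q^i_2$ and the explicit form of the differential $-\sum_i m^i_1(-)\cdot d_1(e^i_0)$ in Theorem \ref{thm:itres}. Degree reasons (as exploited in the proof of Lemma \ref{gortype} and Corollary \ref{cor:ittrimresolvesyou}, where $\deg q^i_2 > 0$) will do much of the work — in particular they force many potential ``cross'' products to vanish mod $\m$ — but assembling these vanishings into the clean rank bound $\mu(I)-3\ell$, tracking the $\ell$ trimmed generators against the $3\ell$ replacements, is the crux and mirrors the iterated bookkeeping in \cite[Theorem 2.4]{christensen2019trimming}.
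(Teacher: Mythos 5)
Your high-level idea is the same as the paper's --- exploit the perfect pairing $\delta_{I_t}$ coming from the Gorenstein ideal $I_t$ and show that passing to $I$ costs at most $3\ell$ in rank --- but the route you propose does not actually deliver the bound, and you say so yourself: the entire content of the theorem is concentrated in your ``third step,'' which you leave as an expected obstacle. The specific mechanism you suggest for closing it is also problematic. A mapping cone does not inherit a DG-algebra structure from its pieces, so ``transporting the DG-algebra structure on the minimal resolution of $R/I_t$ through the comparison maps $q^i_k$'' is not a routine operation; making the iterated trimming complex into a DG algebra (or even just computing enough of the products $T_1\cdot T_2$ on it) is precisely the hard resolution-level work that the statement is designed to avoid, and the degree arguments you cite from Lemma \ref{gortype} and Corollary \ref{cor:ittrimresolvesyou} only kill the maps $q^i_k$ modulo $\m$, not the cross-products of new and old classes in the Tor algebra. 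So as written the proposal identifies the right target number but contains a genuine gap where the proof should be.

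The paper's argument never touches the resolution or any multiplication on it. It uses only that $\delta$ is natural in the ring: Proposition \ref{genset} gives a short exact sequence $0 \to I_t/I \to R/I \xrightarrow{p} R/I_t \to 0$ with $I_t/I \cong k^{\ell}$, and the long exact sequence of Tor then shows that $\tor_2^R(p,k)$ has rank $\mu(I)-2\ell$ while precomposition with $\tor_1^R(p,k)$, as a map $\hom_k(T_1^{R/I_t},T_3^{R/I_t}) \to \hom_k(T_1^{R/I},T_3^{R/I_t})$, has corank $\ell$. Chasing the commutative square comparing $\delta_I$ and $\delta_{I_t}$, and using that $\delta_{I_t}$ is an isomorphism because $R/I_t$ is Gorenstein, gives $\rank\delta_I \geq (\mu(I)-2\ell)-\ell = \mu(I)-3\ell$ with no analysis of products of ``new'' classes at all. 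If you want to salvage your approach, the lesson is to replace the resolution-level multiplication with this functoriality: all the bookkeeping you were worried about is absorbed into the two corank computations for $\tor_i^R(p,k)$, which follow mechanically from $\tor_j^R(k^{\ell},k) \cong k^{\ell\binom{3}{j}}$.
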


\begin{proof}
Throughout the proof, use the notation
$$T^{A}_i := \tor_i^R (A,k),$$
where $A$ is any $R$-module. Notice that by Proposition \ref{genset}, 
$$I_t/I \cong k^{\ell}.$$
Considering the long exact sequence of $\tor$ associated to the short exact sequence
$$\xymatrix{0 \ar[r] & \frac{I_t}{I} \ar[r]^-{\iota} & \frac{R}{I} \ar[r]^-{p} & \frac{R}{I_t} \ar[r] & 0 \\},$$
one counts ranks to find that
$$\rank \tor_2^R (p , k) = \mu (I) - 2\ell, \quad \rank \hom_k (\tor_1^R (p,k ) , T_3^{R/I_t} ) = \mu(I_t) - \ell.$$
Consider the following commutative diagram:
$$\xymatrix{ T_2^{R/I_t} \ar[r]^-{\delta_{I_t}} & \hom_k (T_1^{R/I_t} , T_3^{R/I_t} ) \ar[d]^-{\hom_k (\tor_1^R (p,k ) , T_3^{R/I_t} )}\\
T_2^{R/I} \ar@{=}[d] \ar[u]^-{\tor_2 (p , k)} \ar[r]^-{\varepsilon} & \hom_k (T_1^{R/I} , T_3^{R/I} ) \\
 T_2^{R/I} \ar[r]^-{\delta_I} &  \hom_k (T_1^{R/I} , T_3^{R/I} ) \ar[u]_-{\hom_k (T_1^{R/I} ,\tor_3^R (p,k) )} \\}. $$
 Since $I_t$ is Gorenstein, $\delta_{I_t}$ is an isomorphism. This implies that $\rank \delta_I \geq \rank \varepsilon \geq (\mu(I) - 2 \ell ) - \ell$, which yields the result. 
\end{proof}

\begin{cor}\label{leqclassg}
Adopt the notation and hypotheses of Setup \ref{setup2} and let $b$ be as in Lemma \ref{gortype}. If $\ell \leq s+b-1-\min \{ \ell b , 3 \}$, then $R/I$ has Tor algebra class $G(r)$ for some $r \geq \mu(I) - 3 \ell$. 
\end{cor}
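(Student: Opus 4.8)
The plan is to combine Theorem \ref{toralg} with the structure theorem for Tor algebras of grade $3$ quotients. Recall that Theorem \ref{toralg} tells us $\rank \delta_I \geq \mu(I) - 3\ell$, and in particular, as soon as we know this quantity is positive, we have $\rank \delta_I \geq 2$ whenever $\mu(I) - 3\ell \geq 2$. By the classification of Avramov, Kustin, and Miller (the trichotomy $\mathbf{B}$, $\mathbf{C}(r)$, $\mathbf{G}(r)$, $\mathbf{H}(p,q)$, $\mathbf{T}$), the only Tor algebra classes for which the map $\delta_I$ (which measures $\rank_k(T_2 \to \hom_k(T_1,T_3))$) can have rank $\geq 2$ are $\mathbf{G}(r)$ with $r = \rank \delta_I$ and $\mathbf{H}(p,q)$; but class $\mathbf{H}(p,q)$ forces $\rank(T_1 \cdot T_1) \geq 1$ in a way incompatible with the present setup, and class $\mathbf{T}$ has $\rank \delta_I \leq 1$. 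So the first step is to pin down which classes are even possible for the rings in Setup \ref{setup2}.

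First I would establish that $T_1 \cdot T_1 = 0$ for $R/I$. This rules out classes $\mathbf{B}$, $\mathbf{H}(p,q)$ with $p \geq 1$, and $\mathbf{T}$, since each of those requires a nonzero product of two degree-one elements. The cleanest route is again the short exact sequence $0 \to I_t/I \to R/I \to R/I_t \to 0$ used in the proof of Theorem \ref{toralg}, together with the fact that $I_t$ is Gorenstein (hence its Tor algebra is a Poincaré duality algebra with $T_1 \cdot T_1$ possibly nonzero, but landing in $T_2$) and a degree/generation argument showing that the generators of $I$ lie in degrees $s$ and $s+1$ with products forced into too-high degrees, or alternatively by invoking the analogous computation from \cite{vandebogert2019structure}. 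In fact the condition $\ell \leq s+b-1 - \min\{\ell b, 3\}$ is precisely engineered so that the relevant Betti numbers and degree bounds force $T_1 \cdot T_1 = 0$; I would unwind this inequality to see exactly which multiplication-degree comparison it encodes (it should say that the product of two minimal generators, living in degree $\geq 2s$, cannot be a minimal second syzygy, whose degrees are controlled by $s$, $b$, and $\ell$ via Corollary \ref{cor:ittorrk} and the shape of the iterated trimming complex).

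Next, with $T_1 \cdot T_1 = 0$ in hand, the classification forces $R/I$ into class $\mathbf{C}(3)$, $\mathbf{G}(r)$, or the trivial/degenerate cases. Since $\pd_R(R/I) = 3$ and $R/I$ is not Gorenstein (its type is $\ell + 1 \geq 2$ or larger by Theorem \ref{tippingpt} and Lemma \ref{gortype}, actually its socle is $k(-s)^\ell \oplus k(-2s+1)$ so $\type(R/I) = \ell+1$), class $\mathbf{C}(3)$ has $\rank \delta \leq 1$, so the only remaining possibility consistent with $\rank \delta_I \geq \mu(I) - 3\ell \geq 2$ is $\mathbf{G}(r)$ with $r = \rank \delta_I \geq \mu(I) - 3\ell$. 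I would then just cite Definition \ref{def:classg} to conclude the statement, noting that the hypothesis $\ell \leq s + b - 1 - \min\{\ell b, 3\}$ should also be checked to guarantee $\mu(I) - 3\ell \geq 2$ (equivalently $r \geq 2$), which follows because $\mu(I) = \mu(I_t) - \min\{\ell b, \ldots\} + \ldots$ can be computed via Corollary \ref{cor:ittorrk} and the generating set of Proposition \ref{genset}: $\mu(I) = (s+1-\ell) + b + $ (contribution from the trimmed generators), and the inequality rearranges to $\mu(I) - 3\ell \geq 2$.

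The main obstacle I anticipate is the $T_1 \cdot T_1 = 0$ step and, relatedly, correctly interpreting the numerical hypothesis $\ell \leq s+b-1-\min\{\ell b, 3\}$. The vanishing of $T_1 \cdot T_1$ is what cleanly separates class $\mathbf{G}$ from classes $\mathbf{B}$, $\mathbf{H}$, $\mathbf{T}$ in the Avramov–Kustin–Miller classification, and it is almost surely true here for degree reasons (minimal generators of $I$ sit in degrees $s, s+1$, so nonzero products would sit in degree $\geq 2s$, but the module $T_2$ — i.e., the first syzygies — are generated in degrees bounded above by something like $s+1$ plus the degrees coming from the $G^i_\bullet = $ Koszul complexes and the map $q_1^i$; the hypothesis on $\ell$ ensures no syzygy degree reaches $2s$). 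Getting this degree bookkeeping exactly right, using Corollary \ref{cor:ittorrk} and the explicit form of the iterated trimming complex resolving $R/I$ from Corollary \ref{cor:ittrimresolvesyou}, is the part that requires genuine care; everything after that is a direct appeal to the classification theorem and Theorem \ref{toralg}.
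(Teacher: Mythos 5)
Your overall strategy matches the paper's: invoke Theorem \ref{toralg}, reduce via the Tor-algebra classification to showing $\rank \delta_I \geq 2$, and verify this numerically. But two points need correction. First, the hypothesis $\ell \leq s+b-1-\min\{\ell b,3\}$ has nothing to do with forcing $T_1\cdot T_1=0$; it is exactly the rearrangement of $\mu(I)-3\ell\geq 2$. The paper's entire proof is this computation: by Corollary \ref{cor:ittorrk}, $\mu(I)=\mu(I_t)+2\ell-\rank(q_1\otimes k)$ with $\mu(I_t)=s+1+b$ and $\rank(q_1\otimes k)\leq\min\{\ell b,3\}$, hence $\rank\delta_I\geq\mu(I)-3\ell\geq s+1+b-\min\{\ell b,3\}-\ell\geq 2$. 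Your third paragraph gestures at this correctly, but your second paragraph sends you hunting for a degree argument that is not what the hypothesis encodes, and you never actually carry out the bound on $\mu(I)$, which is the one substantive step of the proof.

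Second, your classification step contains a false claim: class $\mathbf{H}(p,q)$ does \emph{not} force $\rank(T_1\cdot T_1)\geq 1$. The class $\mathbf{H}(0,q)$ has $T_1\cdot T_1=0$ and $\rank\bigl(T_2\to\hom_k(T_1,T_3)\bigr)=q$, which can be $\geq 2$. So proving $T_1\cdot T_1=0$ together with $\rank\delta_I\geq 2$ does not by itself isolate class $G(r)$: one must also control $\rank(T_1\cdot T_2)$ (it equals $1$ for class $G(r)$ but equals $q$ for $\mathbf{H}(0,q)$) to exclude $\mathbf{H}(0,q)$ with $q\geq 2$, while $T_1\cdot T_1=0$ is what excludes $\mathbf{B}$ in the borderline case $\rank\delta_I=2$. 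The paper leaves this reduction implicit, deferring to \cite{avramov1988poincare} and the argument of \cite{christensen2019trimming}, but your explicit version of it, as written, fails precisely at the class $\mathbf{H}(0,q)$.
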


\begin{proof}
It suffices to show that, in the notation of the proof of Theorem \ref{toralg}, the map $\delta_I$ has rank at least $2$. By Corollary \ref{cor:ittorrk},
$$\mu(I) = \mu(I_t) + 2 \ell -\rank \Bigg( \begin{pmatrix} q_1^1 \\
\vdots \\
q_1^m \\
\end{pmatrix} \otimes k\Bigg).$$
Since $\mu(I_t) = s+1+b$ and $\rank \Bigg( \begin{pmatrix} q_1^1 \\
\vdots \\
q_1^m \\
\end{pmatrix} \otimes k\Bigg) \leq \min \{ \ell b , 3 \}$, we deduce that
$$\rank (\delta_I) \geq s+1+b-\min\{ \ell b , 3 \}-\ell \geq 2.$$
\end{proof}

The following is a generalization of Lemma $8.6$ in \cite{vandebogert2019structure}; in simpler words, it shows that iteratively trimming a Gorenstein ideal tends to preserve the Tor-algebra class while changing the homological parameters. 

\begin{prop}\label{classg}
Adopt the notation and hypotheses of Setup \ref{setup2} and let $b$ be as in Lemma \ref{gortype}. If $\ell \leq s+b-1-\min \{ \ell b , 3 \}$, then $R/I$ has Tor algebra class $G( \mu (I) - 3 \ell)$. 
\end{prop}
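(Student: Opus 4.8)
The plan is to combine Corollary \ref{leqclassg} with Theorem \ref{toralg} to pin down the exact value of $r$. By Corollary \ref{leqclassg}, the hypothesis $\ell \leq s+b-1-\min\{\ell b, 3\}$ already guarantees that $R/I$ has Tor algebra class $G(r)$ for \emph{some} $r \geq \mu(I) - 3\ell$, where $r = \rank_k(T_2 \to \hom_k(T_1, T_3)) = \rank \delta_I$ by Definition \ref{def:classg}. So the only thing left is the reverse inequality: $\rank \delta_I \leq \mu(I) - 3\ell$. In other words, I must show that the lower bound from Theorem \ref{toralg} is in fact an equality under the stated hypothesis.

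First I would revisit the commutative diagram in the proof of Theorem \ref{toralg} and extract an upper bound for $\rank \delta_I$. The key point is that $\delta_I$ factors (up to the identifications in that diagram) through maps involving $\tor_i^R(p,k)$ and $\tor_3^R(p,k)$, and I want to bound its rank from above by tracking where it can land. Concretely, the map $\varepsilon = \hom_k(\tor_1^R(p,k), T_3^{R/I_t}) \circ \delta_{I_t} \circ \tor_2^R(p,k)$ has rank at most $\rank \tor_2^R(p,k) = \mu(I) - 2\ell$ (since $\delta_{I_t}$ is an isomorphism), but then one loses another $\ell$ because $\rank \hom_k(\tor_1^R(p,k), T_3^{R/I_t}) = \mu(I_t) - \ell$ interacts with the image; the content is to see that the correction is \emph{exactly} $\ell$, not less. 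This requires understanding $\tor_3^R(p,k)$: since $I_t/I \cong k^\ell$ and $R/I_t$ is Gorenstein of type $1$, the long exact sequence of $\tor$ shows $T_3^{R/I} \to T_3^{R/I_t}$ has a kernel/cokernel governed by $\ell$, and I would compute that $\rank \tor_3^R(p,k)$ combined with the type count forces the image of $\delta_I$ inside $\hom_k(T_1^{R/I}, T_3^{R/I})$ to have dimension at most $\mu(I) - 3\ell$.

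Alternatively — and this may be cleaner — I would argue directly from the Tor-algebra multiplication table. Definition \ref{def:classg} says class $G(r)$ forces $\rank_k(T_1 \cdot T_2) = 1$ and $T_1 \cdot T_1 = 0$; once we know (from Corollary \ref{leqclassg}) that $R/I$ is of class $G$ with some parameter $r \geq \mu(I) - 3\ell \geq 2$, the parameter $r$ is an invariant of the algebra, namely $\rank \delta_I$. So I would show $\rank \delta_I = \mu(I) - 3\ell$ by exhibiting that the image of $\delta_I$ has dimension at most $\mu(I)-3\ell$ using the explicit iterated trimming resolution of Corollary \ref{cor:ittrimresolvesyou}: the generators of $I$ coming from the trimmed part $R_+\phi_{s+2-\ell} + \cdots + R_+\phi_{s+1}$ and the associated $G^i_\bullet$ (Koszul complexes on $k$) contribute to $T_1$ and $T_2$ in ways whose products vanish, so at most $\mu(I) - 3\ell$ of the $e_i$'s can participate in the nonzero product $e_i f_i = g_1$. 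This is essentially a bookkeeping argument using Corollary \ref{cor:ittorrk} to identify which basis elements of $T_1, T_2, T_3$ come from $F_\bullet$ versus the $G^i_\bullet$, together with the observation (as in the proofs of Lemma \ref{gortype} and Corollary \ref{cor:ittrimresolvesyou}) that the connecting maps $q^i_k$ vanish mod $k$ in the relevant degrees.

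The main obstacle I anticipate is the upper bound $\rank \delta_I \leq \mu(I) - 3\ell$: the lower bound is already done (Theorem \ref{toralg}), and membership in class $G$ is already done (Corollary \ref{leqclassg}), so all the real work is showing the $3\ell$ drop is exact rather than merely an inequality. This amounts to proving that each of the three ``losses'' — the $2\ell$ from $\tor_2^R(p,k)$ not being surjective and the further $\ell$ from $\tor_3^R(p,k)$ — is genuinely realized and that there is no additional cancellation making $\rank \delta_I$ strictly smaller than $\mu(I)-3\ell$; equivalently, that the composite $\varepsilon$ in the Theorem \ref{toralg} diagram has rank \emph{exactly} $\mu(I)-3\ell$ and that $\delta_I$ and $\varepsilon$ have the same rank. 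I expect this to follow by carefully chasing the two-row part of the commutative diagram: $\hom_k(T_1^{R/I}, \tor_3^R(p,k))$ is injective on the image of $\varepsilon$ because the obstruction classes $g_1$ (the socle-type generator of $T_3$ supporting the product) survive under $\tor_3^R(p,k)$ — $R/I_t$ being Gorenstein means its single degree-$3$ Tor class is detected, and the extra $\ell$ classes in $T_3^{R/I}$ killed by $p$ are precisely the ones that do \emph{not} support products — so no rank is lost passing from $\varepsilon$ to $\delta_I$. Making that last sentence rigorous, via the inverse-system / trimming-complex description of the degree-$3$ part of the resolution, is where I would spend the most care.
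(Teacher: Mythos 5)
You have correctly reduced the problem to the upper bound $\rank \delta_I \leq \mu(I) - 3\ell$ (the lower bound and membership in class $G$ being Theorem \ref{toralg} and Corollary \ref{leqclassg}), but neither of your two routes to that upper bound actually closes. In the first route, the commutative diagram from Theorem \ref{toralg} is built to give \emph{lower} bounds: writing $h := \hom_k(T_1^{R/I}, \tor_3^R(p,k))$, one has $\varepsilon = h \circ \delta_I$, hence $\rank\varepsilon \leq \rank\delta_I$, and to reverse this you need \emph{both} that $\ker h \cap \im \delta_I = 0$ \emph{and} that $\rank\varepsilon \leq \mu(I)-3\ell$. The second of these is itself not automatic: the natural bound on the composite $\hom_k(\tor_1^R(p,k),T_3^{R/I_t}) \circ \delta_{I_t} \circ \tor_2^R(p,k)$ is $\mu(I)-2\ell$, and getting the further drop of $\ell$ requires the $\ell$-dimensional kernel of $\hom_k(\tor_1^R(p,k),T_3^{R/I_t})$ to sit entirely inside the image of $\delta_{I_t}\circ\tor_2^R(p,k)$ --- you assert ``the correction is exactly $\ell$'' but give no mechanism for it. The first condition (``the $\ell$ classes killed by $p$ are precisely the ones that do not support products'') is likewise the conclusion you are trying to prove, not an available hypothesis. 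Your second route asserts that the basis elements of $T_1$ and $T_2$ contributed by the Koszul pieces $G^i_\bullet$ have vanishing products; Corollary \ref{cor:ittorrk} only gives dimensions, not multiplication, and the paper never constructs a DG structure on the iterated trimming complex, so this vanishing is not something you can read off from bookkeeping.

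The missing idea is that everything here is \emph{graded}, and the upper bound follows from a pure internal-degree count, with no need to control the maps in the diagram or the multiplicative structure of the resolution. From the graded strands of the long exact sequence of Tor one computes $\dim_k (T_1^{R/I})_s = s+1-\ell$, $\dim_k (T_1^{R/I})_{s+1} = \mu(I)-s-1+\ell$, $\dim_k (T_2^{R/I})_{s+1} = \mu(I)-s-1-2\ell$, and $\dim_k(T_2^{R/I})_{s+2} = s+4\ell$, while $T_3^{R/I}$ is concentrated in internal degrees $s+3$ and $2s+2$. Since the product $T_1 \cdot T_2 \to T_3$ is homogeneous, the only pairings that can be nonzero are $(T_1)_s \cdot (T_2)_{s+2}$ and $(T_1)_{s+1}\cdot(T_2)_{s+1}$ (both landing in degree $2s+2$). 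Hence
\begin{equation*}
\rank \delta_I \;\leq\; \dim_k (T_1^{R/I})_s + \dim_k (T_2^{R/I})_{s+1} \;=\; (s+1-\ell) + (\mu(I)-s-1-2\ell) \;=\; \mu(I)-3\ell,
\end{equation*}
where the first summand bounds the contribution of $(T_2)_{s+2}$ because it maps into $\hom_k((T_1)_s, (T_3)_{2s+2})$, a space of dimension $\dim_k(T_1)_s$. This is the step your proposal does not supply, and without it the proof is incomplete.
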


\begin{proof}
In the notation of the proof of Theorem \ref{toralg}, it suffices to show that
$$\rank \delta_I \leq \mu(I) - 3 \ell.$$
Observe that $\dim_k (T_1^{R/I})_s = s+1-\ell$, so that
$$\dim_k (T_1^{R/I})_{s+1} = \mu(I) - s - 1 + \ell.$$
Moreover, counting ranks on the degree $s+1$ homogeneous strand of the long exact sequence of Tor associated to the short exact sequence
$$0 \to \frac{I_t}{I} \to \frac{R}{I} \to \frac{R}{I_t} \to 0,$$
we obtain
$$\dim_k (T_1^{R/I})_{s+1}  - \dim_k (T_2^{R/I})_{s+1} = 3 \ell$$
$$\implies \dim_k (T_2^{R/I})_{s+1} = \mu(I) - s-1-2\ell.$$
Similarly, a rank count on the degree $s+2$ strand yields
$$\dim_k (T_2^{R/I} )_{s+2} = s+4\ell.$$
By counting degrees, the only nontrivial products can occur between $(T_1^{R/I})_{s}, \ (T_2^{R/I})_{s+2}$ and $(T_1^{R/I})_{s+1}, \ (T_2^{R/I})_{s+1}$; this implies that
\begingroup\allowdisplaybreaks
\begin{align*}
    \rank \delta_I &\leq \dim_k (T_1^{R/I})_{s} + \dim_k (T_2^{R/I})_{s+1} \\
    &= s+1-\ell + \mu(I) - s - 1 -2 \ell \\
    &= \mu(I) - 3 \ell. \\
\end{align*}
\endgroup
Combining this with Corollary \ref{leqclassg}, we find that $R/I$ must be class $G(\mu(I) - 3 \ell)$. 
\end{proof}

\section{Realizability in The Higher Type Case}\label{sec:realizability}

In this section, we construct construct ideals with Tor algebra class $G(r)$ and arbitrarily large type. To begin with, we recall a collection of matrices introduced in Section $7$ of \cite{vandebogert2019structure} (which were inspired by matrices considered in \cite{christensen2019trimming}). It will be particularly easy to apply the construction of Theorem \ref{thm:itres} to the submaximal pfaffians of these matrices to produce interesting classes of ideals attaining Tor algebra class $G(r)$ but with arbitrarily large type. This is stated more precisely in Theorem \ref{thm:hightyperealizability}, which is one of the main results of the paper.

\begin{definition}
Let $U_m^{j}$ (for $j \leq m$) denote the $m\times m$ matrix with entries from the polynomial ring $R=k[x,y,z]$ defined by:
$$(U^{j}_m)_{i,m-i} = x^2, \quad (U^{j}_m)_{i,m-i+1} = z^2, \quad (U^{j}_m)_{i,m-i+2} = y^2 \ \textrm{for} \  i \leq m-j$$
$$(U^{j}_m)_{i,m-i} = x, \quad (U^{j}_m)_{i,m-i+1} = z, \quad (U^{j}_m)_{i,m-i+2} = y \ \textrm{for} \  i >m-j$$
and all other entries are defined to be $0$.
\end{definition}

To see the pattern, we have:
$$U_2^1 = \begin{pmatrix}
       x^{2}&z^{2}\\
       z&y\end{pmatrix}, \ U_3^1 = \begin{pmatrix}
       0&x^{2}&z^{2}\\
       x^{2}&z^{2}&y^{2}\\
       z&y&0\end{pmatrix}, \ U_3^2 = \begin{pmatrix}
       0&x^{2}&z^{2}\\
       x&z&y\\
       z&y&0\end{pmatrix}$$
\begin{definition}\label{def:Vmatrices}
Define $V_m^{j}$ (for $j< m$) to be the $(2m+1)\times (2m+1)$ skew symmetric matrix
$$V^{j}_m := \begin{pmatrix}
O & O_{x^2} & (U_m^{j})^T \\
-(O_{x^2})^T & 0 & ^{y^2}O \\
-U_m^{j} & -(^{y^2}O)^T & O \\
\end{pmatrix}$$
and if $j=m$, then $V_m^m$ is the skew symmetric matrix
$$V^{j}_m := \begin{pmatrix}
O & O_{x^2} & (U_m^{m})^T \\
-(O_{x^2})^T & 0 & ^{y}O \\
-U_m^{m} & -(^{y}O)^T & O \\
\end{pmatrix}$$
\end{definition}

\begin{observation}\label{obs:Vmatbtable}
Let $V_m^j$ be as in Definition \ref{def:Vmatrices}. Then the ideal of submaximal pfaffians $\textrm{Pf} (V_m^j)$ is a grade $3$ Gorenstein ideal with graded Betti table
$$\begin{tabular}{L|L L L L}
     & 0 & 1 & 2 & 3  \\
     \hline 
   0  & 1 & 0 & 0 & 0 \\
    
   2m-j-1 & 0 & 2m+1-j & j & 0 \\
    
   2m-j & 0 & j & 2m+1-j & 0 \\
    
   4m-2j-1 & 0 & 0 & 0 &1 \\
\end{tabular}$$
In particular, $\textrm{Pf} (V_m^j)$ defines a compressed ring.
\end{observation}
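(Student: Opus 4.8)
The plan is to verify directly that the matrices $V_m^j$ are generically built so that their submaximal Pfaffians generate a grade $3$ Gorenstein ideal of the claimed Betti table, and then to read off the compressed property from the socle polynomial. First I would invoke the Buchsbaum--Eisenbud structure theorem: for a $(2m+1)\times(2m+1)$ alternating matrix $V$ whose Pfaffian ideal $\pf(V)$ has grade $3$, the complex
\[
0 \to R \xra{V^{T}} R^{2m+1} \xra{\mathrm{Pf}} R^{2m+1} \xra{V} R \to R/\pf(V) \to 0
\]
is a minimal free resolution, so $R/\pf(V)$ is Gorenstein of projective dimension $3$. Thus the entire content of the statement reduces to two things: (1) checking that $\grade \pf(V_m^j) = 3$, equivalently that $R/\pf(V_m^j)$ is Artinian, and (2) computing the internal degrees of the generators of $\pf(V_m^j)$ to pin down the graded Betti table and hence the socle polynomial.

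For (1), I would observe that the entries of $U_m^j$ are pure powers of $x,y,z$ arranged along three consecutive anti-diagonals, and the off-diagonal blocks $O_{x^2}$, $^{y^2}O$ (resp. $^{y}O$) contribute further pure-power entries; expanding a few submaximal Pfaffians explicitly — as is done for the small cases $U_2^1, U_3^1, U_3^2$ displayed above — one finds monomials that include suitable powers of each of $x$, $y$, and $z$, so the radical of $\pf(V_m^j)$ contains $x,y,z$ and the quotient is Artinian, giving grade exactly $3$. For (2), the key bookkeeping is the degree pattern: the top-left $U_m^j$-block has $m-j$ rows with quadratic entries and $j$ rows with linear entries, and the single-column blocks $O_{x^2}$ and $^{y^2}O$ (or $^{y}O$) are quadratic or linear accordingly; translating this into the internal degrees of the $2m+1$ columns of $V_m^j$, one sees $2m+1-j$ columns are in one degree and $j$ columns in the adjacent degree, which is exactly the $(2m+1-j, j)$ versus $(j, 2m+1-j)$ split in rows $2m-j-1$ and $2m-j$ of the table, with the dual $1$ in row $4m-2j-1$ forced by the self-duality of the resolution (the Pfaffian has degree equal to the sum of a complementary set of column degrees, giving socle degree $4m-2j-1$, symmetric about the middle).

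Finally, for the ``in particular'' clause, I would compute the socle polynomial from the Betti table: $R/\pf(V_m^j)$ has embedding dimension $3$, top socle degree $4m-2j-1$, and socle concentrated in that single top degree with $c_{4m-2j-1}=1$ (Gorenstein of that socle degree), so the socle polynomial is $z^{4m-2j-1}$. Plugging $e=3$ and this socle polynomial into Definition \ref{compdef}, one checks that $\dim_k \m^i/\m^{i+1} = \min\{\binom{i+2}{2}, \binom{4m-2j-1-i+2}{2}\}$ holds for all $i$; this is the standard fact that a Gorenstein Artinian quotient of a $3$-variable polynomial ring with Hilbert function symmetric and as large as possible is compressed, and here the Betti table's linear-plus-quadratic shape is precisely the numerical signature of that maximality, so the Hilbert function attains the bound in every degree. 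The main obstacle I anticipate is step (1) together with the precise degree computation in step (2): one has to be careful that with the pure-power entries in these specific positions no ``accidental'' grade drop or degree coincidence occurs for general $m$ and $j$, and the cleanest route is probably an inductive argument on $m$ (peeling off the linear rows one at a time, relating $V_m^j$ to $V_{m}^{j-1}$ or to a smaller Gorenstein ideal) rather than a brute-force Pfaffian expansion; alternatively, since these matrices were already introduced in \cite{vandebogert2019structure}, one may be able to cite the relevant computation there and only indicate the modification needed for general $\ell$.
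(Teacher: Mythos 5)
Your reduction is the right one, and indeed essentially the only one available: the Buchsbaum--Eisenbud structure theorem turns the whole statement into (i) a grade computation and (ii) degree bookkeeping. The bookkeeping checks out --- the column degrees $2m-j$ with multiplicity $2m+1-j$ and $2m-j+1$ with multiplicity $j$ sum to $m(4m-2j+2)$, as they must for the submaximal Pfaffians to be homogeneous of exactly those degrees --- and the ``in particular'' clause is then formal: the socle degree $s=4m-2j-1$ is odd, the Betti table shows the initial degree of $\pf(V_m^j)$ is $2m-j=\lceil s/2\rceil$, and the symmetry of the Hilbert function of a graded Artinian Gorenstein quotient forces $\dim_k\bigl(R/\pf(V_m^j)\bigr)_i=\min\bigl\{\binom{i+2}{2},\binom{s-i+2}{2}\bigr\}$ in every degree, which is Definition \ref{compdef}. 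Be aware that the paper offers no proof of this Observation at all; it is asserted on the strength of the explicit analysis of these matrices in Section 7 of \cite{vandebogert2019structure} (and in \cite{christensen2019trimming}), so your closing suggestion to cite that computation is in effect what the paper does.

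The genuine gap is your step (1). ``Expanding a few submaximal Pfaffians\,\ldots\,one finds monomials that include suitable powers of each of $x$, $y$, and $z$'' is an observation about $U_2^1$, $U_3^1$, $U_3^2$, not an argument for general $m$ and $j$, and this is precisely the step where the particular placement of the entries matters: an alternating matrix with zero blocks of these sizes and entries in these degrees could perfectly well have Pfaffian ideal of grade less than $3$, in which case the Buchsbaum--Eisenbud complex is not even a resolution and the entire table collapses. What is actually required is to exhibit, for each of $x$, $y$, $z$, a submaximal Pfaffian in which a pure power of that variable occurs with coefficient $\pm 1$ and is not cancelled by any other perfect matching; the anti-diagonal design of $U_m^j$ is engineered so that such matchings exist, but the no-cancellation check must be made, and your alternative inductive scheme (relating $V_m^j$ to $V_m^{j-1}$) is itself only a sketch. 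Until that verification is supplied --- or explicitly delegated to the source where these matrices are analyzed --- the grade, the Betti table, the socle degree, and hence the compressedness all remain conditional.
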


With the matrices of Definition \ref{def:Vmatrices} in hand, we are ready to prove our main result:

\begin{theorem}\label{thm:hightyperealizability}
Let $R=k[x,y,z]$ be a standard graded polynomials ring, where $k$ is a field. Let $r \geq 2$ and $N \geq 1$ be integers with $r+N \geq 5$. Then there exists a homogeneous grade $3$ ideal $I$ with $\type (R/I) \geq N$ and such that $R/I$ has Tor algebra class $G(r)$.   
\end{theorem}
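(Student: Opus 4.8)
The plan is to produce $I$ as the ideal of submaximal pfaffians of one of the matrices $V_m^j$, trimmed in the manner of Setup \ref{setup2}, and to choose the parameters $m$, $j$, $\ell$ so that Proposition \ref{classg} applies and delivers the prescribed Tor algebra class and type. First I would analyze Observation \ref{obs:Vmatbtable}: reading off the Betti table, $\textrm{Pf}(V_m^j)$ is a grade $3$ Gorenstein ideal defining a compressed ring whose socle degree is $4m-2j-1$, with $2m+1-j$ generators in degree $2m-j-1$ and $j$ generators in degree $2m-j$. Matching this against the shape demanded by Setup \ref{setup2}, one sets $s := 2m-j$ (so the socle degree $4m-2j-1 = 2s-1$ as required), which forces $b = j$ in the notation of Lemma \ref{gortype}, and one needs $b = j < s+1 = 2m-j+1$, i.e. $j \leq m$, which holds. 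So $I_t := \textrm{Pf}(V_m^j)$ is a legitimate choice for the "big" Gorenstein ideal.

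Next I would invoke Proposition \ref{genset} (or rather its converse direction, which is the content we actually need): for any $1 \leq \ell \leq s+1$, trimming $I_t$ by replacing $\ell$ of the degree-$s$ generators $\phi_{s+2-\ell},\dots,\phi_{s+1}$ with $R_+\phi_{s+2-\ell}+\cdots+R_+\phi_{s+1}$ produces an ideal $I$ that defines a compressed ring with socle $k(-s)^\ell \oplus k(-2s+1)$, hence lands in Setup \ref{setup2} with the given $\ell$ and $b=j$. Then Proposition \ref{classg} says: provided the numerical hypothesis $\ell \leq s+b-1-\min\{\ell b, 3\}$ holds, $R/I$ has Tor algebra class $G(\mu(I)-3\ell)$. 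By Corollary \ref{cor:ittorrk}, $\mu(I) = \mu(I_t) + 2\ell - \rank(\cdots \otimes k) = (s+1+b) + 2\ell - \rho$ where $\rho = \rank\big((q_1^i)_i \otimes k\big) \leq \min\{\ell b, 3\}$; for the matrices $V_m^j$ one expects (by the degree-counting argument used in the proof of Corollary \ref{cor:ittrimresolvesyou} and Lemma \ref{gortype}) that $\rho = 0$ since all entries of $q_1^i$ sit in positive degree, so $\mu(I) = s+1+b+2\ell$ and the Tor class is $G(s+1+b-\ell) = G(s+1+j-\ell)$.

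It remains to choose $s, j, \ell$ so that $s+1+j-\ell = r$ and $\type(R/I) = \ell + 1 \geq N$, i.e. $\ell \geq N-1$, while respecting $j \leq m$ (equivalently $j \leq s$, since $s = 2m-j \geq j$), $1 \leq \ell \leq s+1$, and the inequality $\ell \leq s+b-1-3 = s+j-4$ from Proposition \ref{classg} (taking the $\min$ to be $3$, which needs $\ell b \geq 3$). Set $\ell := N-1$ if $N \geq 2$, or handle $N=1$ by taking $\ell = 1$ (then $\type \geq 2 \geq 1$ automatically, or one adjusts). From $r = s+1+j-\ell$ we get $s+j = r+\ell-1 = r+N-2$; choosing, say, $j := 1$ (a single degree-$(s+1)$ pfaffian generator) forces $s = r+N-3$, and then $m = (s+j)/2 = (r+N-2)/2$ must be an integer — if $r+N$ is even this is fine, and if $r+N$ is odd one instead takes $j=2$, giving $s = r+N-4$, $m = (r+N-2)/2$, again an integer. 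The hypothesis $r+N \geq 5$ guarantees $s \geq 2$ in the worst case and, after checking, $s \geq 3$ (needed for Setup \ref{setup2}) by enlarging $\ell$ slightly if necessary — one has freedom to increase $N$-worth of trimming since only $\type \geq N$ is required, not equality. One then verifies the Proposition \ref{classg} inequality $\ell \leq s+j-4$ reduces (using $s+j = r+N-2$) to $\ell \leq r+N-6$, i.e. $N-1 \leq r+N-6$, i.e. $r \geq 5$; for $r \in \{2,3,4\}$ with $r+N\geq 5$ one instead takes the $\min$ to be $\ell b$ and increases $j$ (hence $b$) so that $\min\{\ell b,3\}=3$ still fails but the weaker bound $\ell \leq s+b-1-\ell b$ can be arranged, or more simply one enlarges $N$ to $N' \geq 6-r$ and uses $\type \geq N' \geq N$.

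The main obstacle I expect is exactly this bookkeeping in the last paragraph: simultaneously satisfying the integrality constraint on $m = (s+j)/2$, the compressed-ring constraint $s \geq 3$, the trimming range $\ell \leq s+1$, and the Tor-class inequality $\ell \leq s+b-1-\min\{\ell b,3\}$, all while hitting $r = \mu(I)-3\ell$ on the nose and $\type = \ell+1 \geq N$. The flexibility that makes it go through is that $\type \geq N$ is an inequality, so one is free to replace $N$ by any larger value and to pad $\ell$; the one genuinely rigid equation is $\mu(I) - 3\ell = r$, but since $\mu(I) = s+1+j+2\ell$ this just says $s+j-\ell = r-1$, a single linear relation among three free integers, leaving a two-parameter family to navigate the inequalities. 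A clean writeup would fix an explicit formula, e.g. $j \in \{1,2\}$ chosen by the parity of $r+N$, $\ell = $ (a suitable value $\geq N-1$), $s$ determined by $s = r-1+\ell-j$, $m = (s+j)/2$, and then mechanically check each inequality; the pfaffian ideal $I = \textrm{Pf}(V_m^j)$ trimmed $\ell$ times is the desired $I$, acyclicity and the Betti numbers coming for free from Corollary \ref{cor:ittrimresolvesyou} and Corollary \ref{cor:ittorrk}.
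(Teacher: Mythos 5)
Your overall strategy --- trim the pfaffian ideal of a matrix $V_m^j$ and feed the result into Proposition \ref{classg} --- is exactly the paper's, but the parameter bookkeeping you defer to the end is where the proof actually lives, and your concrete choices do not work. The central problem is the parity fix. Once you impose the one rigid equation $s+j-\ell = r-1$ and fix $\ell$, the sum $s+j$ is determined, and $m=(s+j)/2$ is therefore an integer or not \emph{independently of how you split $s+j$ between $s$ and $j$}. Your claim that switching from $j=1$ to $j=2$ repairs integrality is an arithmetic slip: in both cases $m=(r+N-2)/2$, which is a half-integer precisely when $r+N$ is odd. The correct lever is $\ell$, not $j$: the paper takes $j=0$ throughout and trims $\ell=N-1$ generators when $r+N$ is even, $\ell=N$ when $r+N$ is odd (using that only $\type\geq N$ is required). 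You gesture at "enlarging $\ell$," but never commit to it, and your stated construction fails for all odd $r+N$.

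The choice $j\geq 1$ also poisons the verification of the hypothesis of Proposition \ref{classg}. With $b=j\geq1$ and $\ell b\geq 3$ the condition reads $\ell\leq s+b-4$; substituting $s+b=r+\ell-1$ this is equivalent to $r\geq 5$, \emph{independent of $\ell$ and $N$} --- so your fallback of "enlarging $N$" for $r\in\{2,3,4\}$ cannot succeed, and "increasing $j$ so that $\min\{\ell b,3\}=3$ still fails" goes the wrong way (larger $b$ makes $\ell b$ larger). With $j=b=0$ the condition is just $\ell\leq s-1$, i.e.\ $r\geq 2$, which is free. Finally, your assertion that $\rho=\rank\bigl((q_1^i)_i\otimes k\bigr)=0$ is justified by a degree count only when the presentation matrix has no linear entries in the columns hitting the trimmed generators; for $j\geq1$ the matrix $V_m^j$ has linear entries, so the degree-$(s+1)$ second syzygies force constant entries in $q_1^i$ and $\rho$ could be positive, changing $\mu(I)$ and hence the Tor class. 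For $j=0$ every entry of $d_2$ is quadratic and the claim is immediate. All three difficulties evaporate if you set $j=0$ and let the parity of $r+N$ choose $\ell\in\{N-1,N\}$; as written, the proposal has a genuine gap.
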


\begin{proof}
Observe that by Proposition \ref{classg}, it suffices to produce a grade $3$ homogeneous ideal $I$ defining a compressed ring of type $\ell \geq N$ with $\mu (I) - 3 \ell = r$.

Assume first that $r+N$ is even. Set $m:= \frac{r+N-2}{2}$ and let $M:= V^0_m$. The ideal of submaximal Pfaffians $\pf (M)$ has $r+N-1$ minimal generators. Consider the ideal
$$I:= (\pf_1 (M) , \dotsc , \pf_r (M)) + R_+ (\pf_{r+1} (M), \dotsc , \pf_{r+N-1} (M)).$$
Let $(F_\bullet , d_\bullet)$ denote the minimal free resolution of $\pf (M)$ and $(G_\bullet , m_\bullet)$ the Koszul complex resolving $R_+$. In the notation of Theorem \ref{thm:itres}, the maps $q_1^i : F_2 \to G_1$ satisfy $q_1^i \otimes k = 0$ for all $i=1 , \dotsc , N-1$. This follows by a simple degree count, using the fact that $d_2$ has only quadratic entries. Employing Corollary \ref{cor:ittorrk}, $I$ is minimally generated by $r+3(N-1)$ elements. 

Similarly, since $d_3$ has entries in degree $>2$, a degree count shows $q_2^i \otimes k = 0$ for all $i=1, \dotsc N-1$. This means that $R/I$ is a ring of type $N$. Using Corollary \ref{cor:ittorrk} to count Tor ranks in each graded component, $R/I$ has Betti table
$$\begin{tabular}{L|L L L L}
     & 0 & 1 & 2 & 3  \\
     \hline 
   0  & 1 & 0 & 0 & 0 \\
    
   r+N-3 & 0 & r & 0 & 0 \\
    
   r+N-2 & 0 & 3(N-1) & r+4(N-1) & N-1 \\
    
   2(r+N-2)-1 & 0 & 0 & 0 &1 \\
\end{tabular}$$
In particular, $R/I$ is compressed with $\socle (R/I) \cong k(-(r+N-2))^{N-1} \oplus k(-2(r+N-2)+1)$. Since $r+N-2 \geq 3$, $I$ falls into the hypotheses of Proposition \ref{classg} and hence has Tor algebra class $G(r+3(N-1)-3(N-1)) = G(r)$.

Assume now that $r+N$ is odd. Set $m:= \frac{r+N-1}{2}$ and let $M:= V^0_m$. The ideal of submaximal Pfaffians $\pf (M)$ has $r+N$ minimal generators. Consider the ideal
$$I:= (\pf_1 (M) , \dotsc , \pf_r (M)) + R_+ (\pf_{r+1} (M), \dotsc , \pf_{r+N} (M)).$$
By an argument identical to the even case, one finds that $I$ is minimally generated by $r+3N$ elements and defines a compressed ring of type $N+1$. Since $r+N -1 \geq 3$, $I$ falls into the hypotheses of Proposition \ref{classg} and hence has Tor algebra class $G(r+3N - 3N) = G(r)$. 
\end{proof}

\bibliographystyle{amsplain}
\bibliography{biblio}
\addcontentsline{toc}{section}{Bibliography}

\end{document}